\numberwithin{figure}{section}
\newtheorem{thm}{Theorem}
\newtheorem{lem}[thm]{Lemma}
\newproof{proof}{Proof}
\newtheorem{mydef}{Definition}
\newtheorem{exam}{Example}
\begin{document}

\begin{frontmatter}

\title{Characterization of half-radial matrices}

\author{Iveta Hn\v{e}tynková}\ead{iveta.hnetynkova@mff.cuni.cz}

\author{Petr Tich\'y}\ead{petr.tichy@mff.cuni.cz}

\address{Faculty of Mathematics and Physics, Charles University, \\Sokolovsk\'{a} 83, Prague 8,
Czech Republic}

\journal{Linear Algebra and its Applications}

\begin{abstract}
Numerical radius $r(A)$ is the radius of the smallest ball with the center at zero containing
the field of values of a given square matrix $A$. It is well known that 
$r(A)\leq \|A\| \leq 2r(A)$, where $\| \cdot \|$ is the matrix 2-norm.
Matrices attaining the lower bound are called radial, and have been analyzed
thoroughly. This is not the case for matrices attaining the upper bound where only partial results are available. 
In this paper we consider matrices satisfying $r(A)=\|A\|/2$, and call them half-radial. 
We summarize the existing results and formulate new ones. In particular, 
we investigate their singular value decomposition and algebraic structure,
and provide other necessary and sufficient conditions for a matrix to be half-radial. 
Based on that, we study the extreme case of the attainable 
constant~$2$ in Crouzeix's conjecture. The presented results support 
the conjecture of Greenbaum and Overton, that 
the Crabb-Choi-Crouzeix matrix always plays an important role 
in this extreme case.

\end{abstract}

\begin{keyword}
field of values \sep numerical radius \sep singular subspaces \sep Crouzeix's conjecture

\MSC 15A60, 47A12, 65F35 
\end{keyword}

\end{frontmatter}

\section{Introduction}

Consider the space $\mathbb{C}^{n}$ endowed with the Euclidean vector
norm $\|z\|=\left(z^{*}z\right)^{1/2}$ and the Euclidean inner product
$\left\langle z,w\right\rangle =w^{*}z$, 
where $*$ denotes
the Hermitian transpose. Let $A\in\mathbb{C}^{n\times n}$.
The \emph{field of values} (or numerical range) of $A$ is the set
in the complex plane defined as 
\begin{equation}\label{eq:fieldofval}
W(A)\equiv\{\left\langle Az,z\right\rangle :\ z\in\mathbb{C}^{n},\ \|z\|=1\}.
\end{equation}
Recall that $W(A)$ is a compact convex subset of $\mathbb{C}$ which
contains the spectrum $\sigma(A)$ of $A$; see, e.g., \cite[p.~8]{B:HoJo1994}.
The radius of the smallest ball with the center at zero which contains
the field of values, i.e., 
\[
r(A)\equiv\max_{\|z\|=1}|\left\langle Az,z\right\rangle |=
\max_{\zeta\in W(A)}|\zeta|,
\]
is called the \emph{numerical radius}. It is well-known that 
\begin{equation}
r(A)\leq\|A\|\leq2r(A),\label{eq:basic}
\end{equation}
where $\|\cdot\|$ denotes the matrix 2-norm; see, e.g., \cite[p.~331, problem~21]{B:HoJo1990}.
The lower as well as the upper bound in \eqref{eq:basic} are attainable.
If $\rho(A)$ denotes the \emph{spectral radius} of $A$, $\rho(A)=\max\{|\lambda|:\ \lambda\in\sigma(A)\}$, then it holds that 
\[
r(A)=\|A\|\quad\mbox{if and only if}\quad\rho(A)=\|A\|;
\]
see \cite[p.~44, problem~24]{B:HoJo1994} or \cite{Wi1929}. Such
matrices are called \emph{radial}, and there exist several equivalent
conditions which characterize them; see, e.g., \cite[p.~45, problem~27]{B:HoJo1994}
or \cite{GoldZwa74}. 

Concerning the right inequality in \eqref{eq:basic}, a sufficient condition was formulated
in \cite[p.~11, Theorem 1.3-4]{GuRa1997}: If the range of $A$ and $A^*$ are orthogonal then the upper bound is attained.
It is also known that $\|A\|=2r(A)$ if and only if $A$ has a two-dimensional reducing subspace on which it is the shift \cite[p.11, Theorem~1.3-5]{GuRa1997}. 
Thus the field of values of such $A$ is a disc of specific properties; see \cite[Chapter 25-7]{B:hog2013} for a summary. 
However, to our knowledge a deeper analysis of matrices satisfying $r(A)=\frac12\|A\|$, which we call {\em half-radial}, has not been given yet.

In this paper we fill this gap. We derive several equivalent (necessary and sufficient) conditions characterizing half-radial matrices.
We study in more detail their algebraic structure, 
their left and right singular subspaces corresponding to the maximum singular value, etc.   
We show that $A$ is half-radial if and only if there exists a
special subset of the set of maximizers of the quantity $|\left\langle Az,z\right\rangle|$.
The presented results give a strong indication that half-radial matrices correspond to the extreme case of the attainable constant~$2$ in Crouzeix's conjecture.
Motivated by the conjecture of Greenbaum and Overton in \cite[p.~239]{GrOv2018}, and using Crabb's theorem \cite[Theorem~2]{Cr1971}, 
it is shown that a matrix $A$ reaches the upper bound in Crouzeix's inequality with
the constant $2$ for some monomial if and only if its scalar multiple can be unitarily transformed to a block diagonal form with 
one block being the Crabb-Choi-Crouzeix matrix.

The paper is organized as follows. Section~\ref{sec:basics} gives some basic notation and summarizes well-known results on the field of values. Section~\ref{sec:conditions} is the core part giving characterizations of half-radial 
matrices. Section~\ref{sec:consequency} discusses Crouzeix's conjecture. Finally, Section~\ref{sec:conclusion} concludes the paper. 

\section{Preliminaries}\label{sec:basics}

In this section we introduce some basic notation and prove for completeness the inequality \eqref{eq:basic}.
First, recall that any two vectors $z$ and $w$ satisfy the \emph{polarization
identity}
\begin{eqnarray*}
4\left\langle Az,w\right\rangle  & = & \left\langle A(z+w),(z+w)\right\rangle -\left\langle A(z-w),(z-w)\right\rangle \\
 &  & +\mathbf{i}\left\langle A(z+\mathbf{i}w),(z+\mathbf{i}w)\right\rangle -\mathbf{i}\left\langle A(z-\mathbf{i}w),(z-\mathbf{i}w)\right\rangle ,
\end{eqnarray*}
where $\mathbf{i}$ is the imaginary unit, and the \emph{parallelogram
law}
\[
\left\Vert z+w\right\Vert ^{2}+\left\Vert z-w\right\Vert ^{2}=2\left(\left\Vert z\right\Vert ^{2}+\left\Vert w\right\Vert ^{2}\right).
\]
The definition of numerical radius implies the inequality

\begin{equation}
\left|\left\langle Az,z\right\rangle \right|\leq r(A)\|z\|^{2}.\label{eq:rA}
\end{equation}
Using these tools we can now easily prove the following well-known
result; see, e.g., \cite[p.~331, problem~21]{B:HoJo1990} or \cite[p.~9, Theorem~1.3-1]{GuRa1997}.
\begin{thm}
For $A\in\mathbb{C}^{n\times n}$ it holds that 
\[
r(A)\leq\|A\|\leq2r(A).
\]
\end{thm}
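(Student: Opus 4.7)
The statement has two inequalities, of very different difficulty, so I would handle them separately.

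For the easy direction $r(A)\le\|A\|$, I would argue directly from the definition: for any unit vector $z$, Cauchy--Schwarz gives $|\langle Az,z\rangle|\le \|Az\|\,\|z\|\le \|A\|$, and taking the maximum over unit $z$ yields $r(A)\le\|A\|$. Nothing more is needed here.

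For the harder direction $\|A\|\le 2r(A)$, the plan is to use the two identities already recorded in the section, namely the polarization identity and the parallelogram law, together with the basic bound \eqref{eq:rA}. Starting from the characterization $\|A\|=\max_{\|z\|=\|w\|=1}|\langle Az,w\rangle|$, I would apply polarization to write $4\langle Az,w\rangle$ as an alternating sum of four quadratic forms $\langle Au,u\rangle$ with $u\in\{z+w,\,z-w,\,z+\mathbf{i}w,\,z-\mathbf{i}w\}$. Taking absolute values and invoking \eqref{eq:rA} term by term bounds
\[
4|\langle Az,w\rangle|\le r(A)\bigl(\|z+w\|^{2}+\|z-w\|^{2}+\|z+\mathbf{i}w\|^{2}+\|z-\mathbf{i}w\|^{2}\bigr).
\]
Applying the parallelogram law to each pair collapses the right-hand side to $4r(A)(\|z\|^{2}+\|w\|^{2})=8r(A)$ when $z$ and $w$ are unit vectors, yielding $|\langle Az,w\rangle|\le 2r(A)$, and hence $\|A\|\le 2r(A)$ after maximizing.

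The main obstacle, such as it is, lies in the second inequality: the triangle inequality applied naively to the four polarization terms loses a factor that one might fear is not absorbed by the parallelogram law. The small observation that makes the argument go through cleanly is that the \emph{sum} of the four squared norms is exactly $4(\|z\|^{2}+\|w\|^{2})$, so the factor $4$ from polarization and the factor $4$ from parallelogram match up, leaving precisely the constant $2$ in front of $r(A)$. No reduction to the case of real $\langle Az,w\rangle$ is needed if one is willing to put absolute values on all four polarization terms, which is what I would do for brevity.
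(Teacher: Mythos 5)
Your proof is correct and follows essentially the same route as the paper: Cauchy--Schwarz for the easy direction, and polarization plus the parallelogram law plus the bound $|\langle Au,u\rangle|\le r(A)\|u\|^2$ for the hard direction. The only cosmetic difference is the finish: you maximize $|\langle Az,w\rangle|$ over all unit $z,w$, while the paper plugs in a maximizing pair $z$ and $w=Az/\|Az\|$ directly, but both yield $\|A\|\le 2r(A)$ from the same central inequality.
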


\begin{proof}
The left inequality is trivial. Now using the polarization identity, the inequality \eqref{eq:rA}, and the parallelogram law 
we find out that for any $z$ and $w$ it holds that
\[
4\left|\left\langle Az,w\right\rangle \right|\leq4r(A)\left(\left\Vert z\right\Vert ^{2}+\left\Vert w\right\Vert ^{2}\right).
\]
Consider a unit norm vector $z$ such that $\|A\|=\|Az\|$, and
define $w=\frac{Az}{\|Az\|}$. Then using the previous inequality,
we obtain $4\|A\|\leq8r(A).$
\end{proof}

Denote
\begin{equation}\label{eq:jordblock}
J=\left[\begin{array}{cc}
0 & 1\\
0 & 0
\end{array}\right]\in\mathbb{R}^{2\times2}
\end{equation}
the two-dimensional Jordan block corresponding to the zero eigenvalue (the shift).
The following theorem summarizes known characterizations of matrices
satisfying $\|A\|=2r(A)$;
see \cite[Chapter 25-7]{B:hog2013} for a summary and references.

\begin{thm}\label{thm:knownequiv}
Let $A\in\mathbb{C}^{n\times n}$ be a nonzero matrix.
Then the following conditions are equivalent:
\end{thm}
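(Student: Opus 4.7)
The plan is to prove the equivalences as a cycle of implications, with the numerical identity $\|A\|=2r(A)$ and the structural condition that $A$ has a two-dimensional reducing subspace on which it acts as a scaled shift as the two poles. The central tool is the equality case of the chain of inequalities that proved $\|A\|\leq 2r(A)$ in Theorem~1.

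For the easy direction, suppose $A$ is unitarily similar to $\|A\|\,J\oplus B$ with $2r(B)\leq\|A\|$. Then $r(A)=\max\{r(\|A\|J),r(B)\}=\|A\|/2$, which yields $\|A\|=2r(A)$. In the induced basis, the top right and left singular vectors $z$ and $w$ come from the $J$-block and satisfy $Az=\|A\|w$, $Aw=0$, $A^{*}z=0$, $A^{*}w=\|A\|z$, immediately supplying all the equivalent singular-vector and range-orthogonality formulations.

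For the hard direction, assume $\|A\|=2r(A)$, pick a unit vector $z$ with $\|Az\|=\|A\|$, and set $w=Az/\|A\|$. Then $\langle Az,w\rangle=\|A\|$ is a positive real, and the chain
\[
4\|A\|=4|\langle Az,w\rangle|\leq\sum_{\varepsilon\in\{\pm 1,\pm\mathbf{i}\}}|\langle A(z+\varepsilon w),z+\varepsilon w\rangle|\leq r(A)\sum_{\varepsilon}\|z+\varepsilon w\|^{2}=8r(A)=4\|A\|
\]
collapses to equalities throughout. Writing $a=\langle Az,z\rangle$, $b=\|A\|$, $c=\langle Aw,z\rangle$, $d=\langle Aw,w\rangle$ and separating real and imaginary parts in the polarization identity under the requirement that the phases of the four summands align with the positive-real phase of $\langle Az,w\rangle$ should force $a=c=d=0$. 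From $a=0$ one reads off $\langle z,w\rangle=0$, so $V=\mathrm{span}\{z,w\}$ is genuinely two-dimensional, and from $c=d=0$ one reads off that $Aw\perp V$.

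The main obstacle will be closing the loop: the computation above only shows $Aw\in V^{\perp}$ (and, by a dual argument, $A^{*}z\in V^{\perp}$), whereas the structural conclusion requires $Aw=0$ and $A^{*}z=0$ so that $V$ is genuinely reducing. I would resolve this by also using the tightness $|\langle A(z+\varepsilon w),z+\varepsilon w\rangle|=r(A)\|z+\varepsilon w\|^{2}$ extracted from the chain above, which together with $r(A)=\|A\|/2$ should pin down $\|Aw\|=0$. Once $Aw=A^{*}z=0$, in the orthonormal basis $(w,z)$ one has $A|_{V}=\|A\|J$, and extending to an orthonormal basis of $\mathbb{C}^{n}$ produces the unitary similarity to $\|A\|J\oplus B$ with $\|B\|\leq\|A\|$. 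All remaining equivalent conditions then follow routinely from this block form.
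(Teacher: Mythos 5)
The paper does not give its own standalone proof of this theorem (it cites known references), but it does effectively reprove the hard direction in Lemma~\ref{lem:Rao} and the full equivalence in Lemma~\ref{lem:six}. Your plan differs from the paper's route in one essential respect, and it is precisely there that your argument has a genuine gap.

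Your easy direction is fine. For the hard direction, the polarization-plus-tightness argument is sound as far as it goes: forcing all four summands of the polarization identity to be nonnegative reals aligned with the phase of $\langle Az,w\rangle$, combined with the secondary tightness $|\langle A(z+\varepsilon w),z+\varepsilon w\rangle|=r(A)\|z+\varepsilon w\|^{2}$, does indeed yield $a=c=d=0$, hence $z\perp w$, $Aw\perp\mathrm{span}\{z,w\}$ and $A^{*}z\perp\mathrm{span}\{z,w\}$. You correctly flag that this is weaker than $Aw=0$, $A^{*}z=0$, which is what the block form requires.

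The problem is that your proposed fix does not actually close the gap. The tightness conditions you propose to reuse only say that each $\left(z+\varepsilon w\right)/\|z+\varepsilon w\|$ is a numerical-radius maximizer; this is a statement about $\left\langle A\tilde z,\tilde z\right\rangle$ and cannot, by itself, control $\|Aw\|$. Nothing in the polarization chain distinguishes between $Aw=0$ and $Aw$ being a nonzero vector in $\mathrm{span}\{z,w\}^{\perp}$. What is genuinely needed is an operator-level inequality, not a single quadratic-form identity. The paper supplies exactly this in its proof of Lemma~\ref{lem:Rao}: since $e^{\mathbf{i}\theta}A+e^{-\mathbf{i}\theta}A^{*}$ is Hermitian, its operator norm equals its numerical radius, which is at most $2r(A)=\|A\|$; applying this bound to the vectors $w$ and $z$ and using $A^{*}w=\|A\|z$, $Az=\|A\|w$ together with the orthogonality you already established gives
\[
\left\Vert \tfrac{1}{2}\left(Aw+A^{*}w\right)\right\Vert ^{2}=\tfrac{1}{4}\left(\left\Vert Aw\right\Vert ^{2}+\|A\|^{2}\right)\leq\tfrac{1}{4}\|A\|^{2},
\]
forcing $Aw=0$, and symmetrically $A^{*}z=0$. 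Without some version of this Hermitian-part bound (or an equivalent variational argument at the maximizer), the step from $Aw\perp V$ to $Aw=0$ is unjustified, and the unitary similarity to $\|A\|J\oplus B$ does not follow. Once that ingredient is added, the rest of your outline (including the observation $r(B)\leq\tfrac12\|A\|$, which requires a separate short argument analogous to the one just described, restricted to the complementary block) goes through.
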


\begin{enumerate}
\item $\|A\|=2r(A)$, 
\item $A/\|A\|$ is unitarily similar to the matrix 
$\left[ \begin{array}{cc} J & 0\\ 0 & B \end{array} \right]$, where $r(B) \leq 1/2$.
\item $W(A)$ is the disk with the center at zero and the radius $\frac{1}{2}\|A\|.$
\end{enumerate}

Further, consider the singular value decomposition (SVD) of $A$, 
$$ 
A = U \Sigma V^{*}, 
$$
where $U, V \in\mathbb{C}^{n \times n}$ are unitary and $\Sigma$ is diagonal with the singular values of~$A$ on the diagonal.
Denote $\sigma_{max}$
the largest singular value of $A$
satisfying $ \sigma_{max} = \|A\|$. The unit norm left and right singular vectors $u,v$ such that 
\begin{equation}\label{eq:pair}
A v = \sigma_{max} u
\end{equation}
are called a {\em pair of maximum singular vectors} of $A$. Denote
\[
\mathcal{V}_{max}(A)\equiv\left\{ x\in\mathbb{C}^{n}:\ \|Ax\|=\|A\|\|x\|\right\} 
\]
the maximum right singular subspace, i.e., the
span of right singular vectors of $A$ corresponding to $\sigma_{max}$. Similarly,
denote $\mathcal{U}_{max}(A)$ the maximum left singular subspace of $A$.

Recall that any vector $z\in\mathbb{C}^{n}$ can be uniquely decomposed into two
orthogonal components,
\begin{equation}
z=x+y,\qquad x\in\mathcal{R}(A^{*}),\ y\in\mathcal{N}(A),\label{eq:y}
\end{equation}
where $\mathcal{R}(\cdot)$ denotes the \emph{range}, and $\mathcal{N}(\cdot)$
the \emph{null space} of a given matrix. Note that 
here we generally consider the field of complex numbers.

We introduce a definition of matrices we are interested in.

\begin{mydef}
A nonzero matrix $A$ is called half-radial if $\|A\| = 2r(A)$.
\end{mydef}
In the following, we implicitly assume that $A$ is nonzero and $n\geq2$. If $A$ is the zero
matrix or $n=1$, all the statements are trivial.

\section{Necessary and sufficient conditions for half-radial matrices} \label{sec:conditions}

In this section, we derive several new
necessary and sufficient conditions for a nonzero matrix $A$ to be  half-radial.
Taking into account the multiplicity of the maximum singular value of $A$, we then
specify in detail the algebraic structure of half-radial matrices, suggested by Theorem~\ref{thm:knownequiv}.  

Using the decomposition \eqref{eq:y} we define the set 
\begin{equation}\label{def:theta}
\Theta_{A}\equiv\left\{ z\in\mathbb{C}^{n}:\:\|z\|=1,\ r(A)=\left|\left\langle Az,z\right\rangle \right|,\ \left\langle Ax,x\right\rangle =0\right\} ,
\end{equation}
i.e., the set of all unit norm maximizers of $\left|\left\langle Az,z\right\rangle \right|$
such that $\left\langle Ax,x\right\rangle =0.$ 
In the following we prove that a matrix is half-radial if and only if
$\Theta_{A}\neq\{\emptyset\}$, and provide a complete characterization
of the non-empty set $\Theta_{A}$ based on the maximum singular subspaces of $A$. 
%Then, algebraic structure of $A$ is studied.

\subsection{Basic conditions}

A sufficient condition for $A$ to be half-radial is formulated in the following lemma.
Parts of the proofs of Lemma~\ref{lem:two} and Lemma~\ref{lem:Rao} are inspired by \cite[p.11, Theorem 1.3--4, Theorem 1.3--5]{GuRa1997}.
\begin{lem}
\label{lem:two} If $\Theta_{A}$ is non-empty, then $\|A\|=2r(A)$. Moreover,
for each $z\in\Theta_{A}$ of the form \eqref{eq:y} it holds
that $\left\Vert x\right\Vert =\left\Vert y\right\Vert =\frac{1}{\sqrt{2}}$,
$\|Ax\| = \|A\| \|x\|$ and $\left|\left\langle Ax,y\right\rangle \right|=\|Ax\|\|y\|$.
\end{lem}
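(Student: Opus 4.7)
The plan is to start from any $z\in\Theta_A$ with decomposition \eqref{eq:y}, namely $z=x+y$ with $x\in\mathcal{R}(A^*)$ and $y\in\mathcal{N}(A)$, and to produce a chain of inequalities whose left-hand side is $r(A)$ and whose right-hand side is $\|A\|/2$. The combined inequality $r(A)\le\|A\|/2$ together with \eqref{eq:basic} will force $\|A\|=2r(A)$, and every inequality in the chain must be an equality, which yields the remaining three claims.

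The key observation is that the two hypotheses appearing in the definition of $\Theta_A$ collapse $\langle Az,z\rangle$ to an off-diagonal term. Indeed, because $\mathcal{R}(A^*)=\mathcal{N}(A)^\perp$, the components $x$ and $y$ are orthogonal, so $\|z\|^2=\|x\|^2+\|y\|^2=1$. Since $Ay=0$, we have $Az=Ax$, and expanding
\[
\langle Az,z\rangle=\langle Ax,x\rangle+\langle Ax,y\rangle
\]
together with the condition $\langle Ax,x\rangle=0$ from the definition of $\Theta_A$ gives $\langle Az,z\rangle=\langle Ax,y\rangle$. Hence $r(A)=|\langle Ax,y\rangle|$.

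Now I would apply Cauchy--Schwarz, the operator-norm bound, and the arithmetic--geometric mean inequality in sequence:
\[
r(A)=|\langle Ax,y\rangle|\;\le\;\|Ax\|\,\|y\|\;\le\;\|A\|\,\|x\|\,\|y\|\;\le\;\|A\|\,\frac{\|x\|^{2}+\|y\|^{2}}{2}=\frac{\|A\|}{2}.
\]
Combined with the general bound $\|A\|\le 2r(A)$ from the theorem proved earlier in the excerpt, this forces equality throughout. Equality in the AM--GM step (under $\|x\|^2+\|y\|^2=1$) forces $\|x\|=\|y\|=1/\sqrt{2}$; equality in the operator-norm step gives $\|Ax\|=\|A\|\,\|x\|$; and equality in Cauchy--Schwarz gives $|\langle Ax,y\rangle|=\|Ax\|\,\|y\|$.

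I do not anticipate a real obstacle here: the only subtlety is noticing that the hypothesis $\langle Ax,x\rangle=0$ built into $\Theta_A$ is exactly what is needed to reduce $\langle Az,z\rangle$ to $\langle Ax,y\rangle$, so that the standard polarization-type argument used in the proof of \eqref{eq:basic} becomes sharp rather than losing the factor two. Once that reduction is made, the rest is a routine tight application of Cauchy--Schwarz and AM--GM with tracking of the equality cases.
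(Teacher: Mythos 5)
Your proof is correct and follows essentially the same route as the paper: after reducing $\langle Az,z\rangle$ to $\langle Ax,y\rangle$ via the definition of $\Theta_A$, the paper also chains Cauchy--Schwarz, the operator-norm bound, and AM--GM (phrased there as $\|x\|\,\|y\|\le 1/2$), sandwiching between $\|A\|/2$ on both ends and reading off the equality cases. The only cosmetic difference is that the paper places $\|A\|/2\le r(A)$ at the front of the chain rather than invoking $\|A\|\le 2r(A)$ afterwards as you do.
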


\begin{proof}
Let $z$ be a unit norm vector. Considering its decomposition \eqref{eq:y},
it holds that
\begin{equation}
\left\langle Az,z\right\rangle =\left\langle Ax,y\right\rangle +\left\langle Ax,x\right\rangle .\label{eq:Ayy}
\end{equation}
Since $1=\|z\|^{2}=\|x\|^{2}+\|y\|^{2}$ and $0\leq\left(\left\Vert x\right\Vert -\left\Vert y\right\Vert \right)^{2}$,
we find out that 
\[
\left\Vert x\right\Vert \left\Vert y\right\Vert \leq\frac{1}{2}\,,
\]
with the equality if and only if $\left\Vert x\right\Vert =\left\Vert y\right\Vert .$ 
If $\Theta_{A}\neq\{\emptyset\}$, then for any unit norm vector $z\in\Theta_{A}$
we get 
\[
\frac{\left\Vert A\right\Vert }{2}\leq r(A)=\left|\left\langle Az,z\right\rangle \right|=\left|\left\langle Ax,y\right\rangle \right|\leq\|Ax\|\|y\|\leq\left\Vert A\right\Vert \left\Vert x\right\Vert \left\Vert y\right\Vert \leq\frac{\left\Vert A\right\Vert }{2}.
\]
Hence all terms are equal, and therefore $2r(A)=\|A\|$ and $\|x\|=\|y\|=\frac{1}{\sqrt{2}}.$
\end{proof}
In the following lemma we prove the opposite implication. 
\begin{lem}
\label{lem:Rao} If $\|A\|=2r(A),$ then $\Theta_{A}$ is non-empty. Moreover,
for any pair $u, v$ of maximum singular vectors of $A$ it holds that $v\in\mathcal{N}(A^{*}), u\in\mathcal{N}(A), u \perp v$ 
and the vector $z=\frac{1}{\sqrt{2}}\left(u+v\right)$ satisfies $z \in \Theta_{A}$.
\end{lem}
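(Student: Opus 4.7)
The plan is to invoke Theorem \ref{thm:knownequiv} to put $A/\|A\|$ in the block form featuring $J$ and $B$ with $r(B)\le 1/2$, and then argue by strong induction on $n$. Since unitary similarity and scaling by $\|A\|$ transport maximum singular pairs, null spaces, ranges, and the set $\Theta_A$ covariantly, it suffices to treat the case $A=M$ where $M$ is that block matrix, $\|A\|=1$, and consequently $\|B\|\le 1$.

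Given a unit maximum singular pair $u,v$ of $M$, I would partition $v=(v_J,v_B)^{T}$ so that $u=Mv=(Jv_J,Bv_B)^{T}$. The constraints $\|v\|^{2}=\|v_J\|^{2}+\|v_B\|^{2}=1$ and $\|u\|^{2}=\|Jv_J\|^{2}+\|Bv_B\|^{2}=1$, together with $\|J\|=1$ and $\|B\|\le 1$, force the equalities $\|Jv_J\|=\|v_J\|$ and $\|Bv_B\|=\|v_B\|$. Since the maximum right singular subspace of $J$ is $\operatorname{span}(e_{2})$, the first of these equalities yields $v_J = c\,e_{2}$ and hence $Jv_J = c\,e_{1}$ for some scalar $c$. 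In the base case $n=2$ the $B$ block is absent, and the three relations $Au=0$, $A^{*}v=0$, $u\perp v$ follow immediately from $Je_{1}=0$, $J^{*}e_{2}=0$, and $\langle e_{1},e_{2}\rangle=0$.

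For the inductive step, if $v_B=0$ the base-case argument suffices verbatim. Otherwise $v_B\ne 0$, and $\|Bv_B\|=\|v_B\|$ forces $\|B\|=1$; combined with $r(B)\le 1/2$ and the general bound \eqref{eq:basic}, this gives $r(B)=\|B\|/2$, so $B$ is itself half-radial. Since $B$ has size $n-2<n$, the strong induction hypothesis applied to the unit pair $(u_B/\|u_B\|,v_B/\|v_B\|)$ of $B$ delivers $Bu_B=0$, $B^{*}v_B=0$, and $u_B\perp v_B$. Assembling the two blocks then yields $Mu=0$, $M^{*}v=0$, and $\langle u,v\rangle=|c|^{2}\langle e_{1},e_{2}\rangle+\langle u_B,v_B\rangle=0$.

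It remains to verify that $z=\tfrac{1}{\sqrt{2}}(u+v)\in\Theta_A$. Because $u\in\mathcal{N}(A)$ and $v\in\mathcal{R}(A^{*})$ with $u\perp v$, the orthogonal decomposition \eqref{eq:y} of $z$ is $x=v/\sqrt{2}$, $y=u/\sqrt{2}$; then $\|z\|=1$, $Az=Av/\sqrt{2}=\|A\|u/\sqrt{2}$ gives $|\langle Az,z\rangle|=\|A\|/2=r(A)$, and $\langle Ax,x\rangle=\tfrac{\|A\|}{2}\langle u,v\rangle=0$, so all three conditions defining $\Theta_A$ are satisfied and in particular $\Theta_A$ is non-empty. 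The main delicate point of the proof is the inductive step: one must recognise that, whenever $v$ overlaps the $B$ block nontrivially, $B$ itself is forced to be half-radial, so that the inductive hypothesis is applicable. Once this is in place, the remaining work is bookkeeping with the block partition.
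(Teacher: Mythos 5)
Your proof is correct, but it takes a genuinely different route from the paper. The paper's argument is self-contained and works directly with an arbitrary maximum singular pair: normalizing $\|A\|=1$, it notes that $e^{\mathbf{i}\theta}A+e^{-\mathbf{i}\theta}A^{*}$ is Hermitian with norm (equal to its numerical radius) at most $1$ for every $\theta$, expands $\|e^{\mathbf{i}\theta}Au+e^{-\mathbf{i}\theta}A^{*}u\|^{2}$ and the analogous quantity with $v$, and, using $\|A^{*}u\|=\|Av\|=1$, extracts
\[
\|Au\|^{2}\leq-2\,\mathrm{Re}\bigl(e^{2\mathbf{i}\theta}\langle A^{2}u,u\rangle\bigr),
\qquad
\|A^{*}v\|^{2}\leq-2\,\mathrm{Re}\bigl(e^{2\mathbf{i}\theta}\langle A^{2}v,v\rangle\bigr),
\]
which, holding for all $\theta$, force $Au=0$ and $A^{*}v=0$; orthogonality of $u$ and $v$ and membership of $z$ in $\Theta_A$ then follow as in your last paragraph. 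You instead invoke the structural Theorem~\ref{thm:knownequiv} (condition~2) as a black box, reduce by unitary similarity and scaling to the form $J\oplus B$ with $r(B)\leq 1/2$, and argue by strong induction, the key observation being that a nontrivial overlap of $v$ with the $B$-block forces $\|B\|=1=2r(B)$, so $B$ inherits half-radiality at strictly smaller size. Both routes are valid and not circular, since Theorem~\ref{thm:knownequiv} is quoted from the literature before Lemma~\ref{lem:Rao}. The paper's approach is more elementary and does not presuppose the block reduction (which the paper later re-derives in sharper form as Lemma~\ref{lem:six}); yours is shorter given the structural result and makes the inheritance to the $B$-block transparent. One detail worth stating explicitly so the induction bottoms out cleanly: when $B$ is a scalar, $r(B)\leq 1/2<1$ forces $\|Bv_{B}\|<\|v_{B}\|$ unless $v_{B}=0$, so the scalar-$B$ case always falls under your $v_{B}=0$ branch and the inductive call is never made on a $1\times1$ matrix.
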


\begin{proof}
Assume without loss of generality that $\|A\| = \sigma_{max}=1$.
Consider now any unit norm maximum right singular vector $v\in\mathcal{V}_{max}(A)$ and denote
$u=Av$ the corresponding maximum left singular vector. Then
\[
\|u\|=\|Av\|=\|A\|\|v\|=\|v\| =1. 
\]
Moreover, for any $\theta\in\mathbb{R}$ we obtain 
\[
r(A)=r\left(e^{\mathbf{i}\theta}A\right)=\frac{1}{2}.
\]
Then, since $e^{\mathbf{i}\theta}A+e^{-\mathbf{i}\theta}A^{*}$ is Hermitian, it holds that
\[
\left\Vert e^{\mathbf{i}\theta}A+e^{-\mathbf{i}\theta}A^{*}\right\Vert = r\left(e^{\mathbf{i}\theta}A+e^{-\mathbf{i}\theta}A^{*}\right) 
\leq r\left(e^{\mathbf{i}\theta}A\right)+r\left(e^{-\mathbf{i}\theta}A^{*}\right) = 1\,
\]
and
\[
\left\Vert e^{\mathbf{i}\theta}Au+e^{-\mathbf{i}\theta}A^{*}u\right\Vert ^{2}\leq1,\ \left\Vert e^{\mathbf{i}\theta}Av+e^{-\mathbf{i}\theta}A^{*}v\right\Vert ^{2}\leq1\,.
\]
For the norm on the left we get 
\begin{eqnarray*}
\left\Vert e^{\mathbf{i}\theta}Au+e^{-\mathbf{i}\theta}A^{*}u\right\Vert ^{2} 
& = & e^{2\mathbf{i}\theta} \left\langle A^2u,u\right\rangle 
+ e^{-2\mathbf{i}\theta} \left\langle (A^*)^2u,u\right\rangle 
+ \left\langle Au,Au\right\rangle + \left\langle A^*u,A^*u\right\rangle \\
& = &  2\mathrm{Re}\left(e^{2\mathbf{i}\theta}\left\langle A^{2}u,u\right\rangle \right) +
\left\Vert Au\right\Vert ^{2} + \left\Vert A^*u\right\Vert ^{2}.
\end{eqnarray*}
Recalling that $\|v\|= \|A^*u\|=1$, we obtain
\[
\left\Vert Au\right\Vert ^{2}\leq - 2\mathrm{Re}\left(e^{2\mathbf{i}\theta}\left\langle A^{2}u,u\right\rangle \right).
\]
Similarly from $\|u\|= \|Av\|=1$, it can be proved that
\[
 \left\Vert A^{*}v\right\Vert ^{2}\leq-2\mathrm{Re}\left(e^{2\mathbf{i}\theta}\left\langle A^{2}v,v\right\rangle \right).
\]
Since the inequalities hold for any $\theta\in\mathbb{R}$, we get $\|A^*v\|=\|Au\|=0$, i.e., 
$v\in\mathcal{N}(A^{*})$ and $u\in\mathcal{N}(A)$. Hence, $v\perp\mathcal{R}(A)$,
$u\perp\mathcal{R}(A^{*})$ and the maximum left and right singular vectors $u, v$ are orthogonal. Furthermore,
\[
\left\langle Av,v\right\rangle = \left\langle Au,v\right\rangle =\left\langle Au,u\right\rangle =0.
\]

Clearly, the unit norm vector
$$
z=\frac{1}{\sqrt{2}}\left(u+v\right)
$$
is in the form \eqref{eq:y} with $x= \frac{1}{\sqrt{2}} \, v, y= \frac{1}{\sqrt{2}} \, u$.
Moreover, $z$ is a maximizer by
\begin{eqnarray}
\left\langle Az,z\right\rangle   & = & \frac{1}{2}\left\langle A(u+v),(u+v)\right\rangle \nonumber \\
 & = & \frac{1}{2}\left(\left\langle Av,v\right\rangle +\left\langle Av,u\right\rangle +\left\langle Au,v\right\rangle +\left\langle Au,u\right\rangle \right)\nonumber\\ 
 & = & \frac{1}{2}\left\langle Av,u\right\rangle \nonumber
\end{eqnarray}
yielding 
$$
|\left\langle Az,z\right\rangle | =\frac{1}{2}=r(A).
$$
Since also $\left\langle Ax,x\right\rangle = \frac{1}{2}\left\langle Av,v\right\rangle=0$, it holds that $z\in\Theta_{A}$.
\end{proof}

Recall that for any pair $u, v$ of singular vectors of $A$ it holds that $v\in\mathcal{R}(A^{*}), u\in\mathcal{R}(A)$. Consequently, the previous lemmas imply the following result characterizing half-radial matrices by the properties of their maximum right singular subspace. 
\begin{lem}
\label{lem:four}$\|A\|=2r(A)$ if and only if for any unit norm vector $v\in\mathcal{V}_{max}(A)$
it holds that 
\begin{equation}\label{eq:strange-1}
\begin{split}
v\in\mathcal{R}(A^{*})\cap\mathcal{N}(A^{*}),\qquad Av\in\mathcal{R}(A)\cap\mathcal{N}(A), \\
\mbox{and} \quad z = \frac{1}{\sqrt{2}} \left(v + \frac{Av}{\|A\|}\right) \quad \mbox{maximizes} \quad |\left\langle Az,z\right\rangle|.
\end{split}
\end{equation}
Equivalently, $\|A\|=2r(A)$ if and only if there exists a unit norm
$v\in\mathcal{V}_{max}(A)$ such that the condition \eqref{eq:strange-1}
is satisfied.
\end{lem}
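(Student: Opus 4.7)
The plan is to organize the proof as a chain of three implications, since the two ``if and only if'' statements in the lemma together amount to showing: (a) $\|A\|=2r(A)$ forces condition \eqref{eq:strange-1} for \emph{every} unit norm $v \in \mathcal{V}_{max}(A)$; (b) existence of even one such $v$ suffices for $\|A\|=2r(A)$. The implication from ``for any'' to ``exists'' is automatic (the space $\mathcal{V}_{max}(A)$ is nonempty because $\|A\| = \sigma_{max}$ is attained), so only (a) and (b) require argument.

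For (a), I would fix an arbitrary unit $v \in \mathcal{V}_{max}(A)$ and define $u \equiv Av/\|A\|$. Then by construction $(u,v)$ satisfies \eqref{eq:pair}, i.e., is a pair of maximum singular vectors, and Lemma~\ref{lem:Rao} applies verbatim: it hands me $v \in \mathcal{N}(A^*)$, $u \in \mathcal{N}(A)$ (hence $Av \in \mathcal{N}(A)$), and the fact that $(u+v)/\sqrt{2} = \tfrac{1}{\sqrt{2}}(v + Av/\|A\|)$ belongs to $\Theta_A$, i.e., is a maximizer of $|\langle A\cdot,\cdot\rangle|$. The two range memberships $v \in \mathcal{R}(A^*)$ and $Av \in \mathcal{R}(A)$ are exactly the standard SVD fact recalled in the sentence just before the lemma statement, so nothing new is needed there.

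For (b), I would assume the conditions in \eqref{eq:strange-1} for some unit $v$ and compute directly. The null space condition $v \in \mathcal{N}(A^*)$ gives $\langle Av,v\rangle = 0$, so in particular $v \perp Av$; combined with $\|Av\|=\|A\|$, this shows $z \equiv \tfrac{1}{\sqrt{2}}(v + Av/\|A\|)$ has unit norm. Expanding $\langle Az,z\rangle$ into its four terms and using $A^*v = 0$ to kill $\langle Av,v\rangle$ and $\langle A^2 v, v\rangle$, and $A(Av)=0$ (from $Av \in \mathcal{N}(A)$) to kill $\langle A^2 v, Av\rangle$, the only surviving term is $\tfrac{1}{2\|A\|}\langle Av, Av\rangle = \|A\|/2$. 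Since $z$ is by hypothesis a maximizer, $r(A) = |\langle Az,z\rangle| = \|A\|/2$, which combined with \eqref{eq:basic} yields $\|A\| = 2r(A)$.

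The substance of the argument has already been carried out in Lemma~\ref{lem:Rao}, so I do not anticipate a real obstacle. The only point that needs care is bookkeeping in part (b): one must see that the \emph{two separate} null space conditions in \eqref{eq:strange-1} each play a distinct role in collapsing the expansion of $\langle Az,z\rangle$, and that the range conditions, while part of the stated characterization, are essentially free of content once the null space conditions are in place.
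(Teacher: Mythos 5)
Your forward direction (a) is essentially identical to the paper's: apply Lemma~\ref{lem:Rao} to the pair $(u,v)$ with $u=Av/\|A\|$, and the range memberships come for free from the SVD. The divergence is in the backward direction (b). The paper's proof does not expand $\langle Az,z\rangle$ at all: it notes that the hypotheses of \eqref{eq:strange-1} place $v$ in $\mathcal{R}(A^*)$ and $u=Av/\|A\|$ in $\mathcal{N}(A)$, so $z=\tfrac{1}{\sqrt{2}}(v+u)$ is already written in the canonical orthogonal decomposition \eqref{eq:y} with $x=v/\sqrt{2}$; since $z$ is assumed to be a maximizer and $\langle Ax,x\rangle=\tfrac12\langle Av,v\rangle=0$, the vector $z$ lies in $\Theta_A$ by definition, and Lemma~\ref{lem:two} then delivers $\|A\|=2r(A)$. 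You instead compute $\langle Az,z\rangle$ term by term, kill three of the four cross terms using $A^*v=0$ and $A(Av)=0$, and read off $|\langle Az,z\rangle|=\|A\|/2$ directly. Both routes are sound. The paper's version is more modular and stays inside the $\Theta_A$ framework that the section is built around; yours is more self-contained and has the pedagogical advantage of making explicit which hypotheses in \eqref{eq:strange-1} are actually load-bearing for this direction (the null-space conditions and the maximizer condition) versus which are automatic (the range conditions). One small nit: once you have $r(A)=|\langle Az,z\rangle|=\|A\|/2$, the conclusion $\|A\|=2r(A)$ is immediate; invoking \eqref{eq:basic} at that point is superfluous.
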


\begin{proof}
Let $\|A\|=2r(A)$. For any $v\in\mathcal{V}_{max}(A)$, $\|v\|=1$,
$$
u = \frac{Av}{\|A\|} \quad \mbox{and} \quad v 
$$ 
is a pair of maximum singular vectors of $A$. Thus from Lemma~\ref{lem:Rao} it follows that 
$z$ defined in \eqref{eq:strange-1} is a unit norm maximizer of $|\left\langle Az,z\right\rangle|$.
Furthermore, by the same lemma $v \in\mathcal{N}(A^{*})$ and $Av = \|A\| u \in\mathcal{N}(A)$. 
Moreover, since $v$ and $u$ are right and left singular vectors, we also
have $v\in\mathcal{R}(A^{*})$, $Av\in\mathcal{R}(A)$ yelding \eqref{eq:strange-1}. 

On the other hand, if there is a unit norm right singular vector $v\in\mathcal{V}_{max}(A)$ 
such that the condition \eqref{eq:strange-1} is satisfied, then the vector
\[
z=\frac{1}{\sqrt{2}}\left(v+u\right), \qquad \mbox{where} \qquad u=\frac{Av}{\|A\|},
\]
is a unit norm maximizer of $|\left\langle Az,z\right\rangle|$.
Moreover, $v$ is the component of $z$ in $\mathcal{R}(A^{*})$ and 
$\left\langle Av,v\right\rangle =0$.
From \eqref{def:theta}, $z$ is an element of $\Theta_{A}$. Hence, $\Theta_{A}$ is non-empty and by Lemma~\ref{lem:two} we conclude that $\|A\|=2r(A)$. 
\end{proof}

The previous lemma implies several SVD properties of half-radial matrices,
which we summarize below. 
 
\begin{lem}\label{lem:subspaces} 
 $A\in\mathbb{C}^{n\times n}$ be half-radial, i.e., $\|A\|=2r(A)$. Then:
\begin{enumerate}
\item $\mathcal{V}_{max}(A)\subseteq \mathcal{R}(A^{*})\cap\mathcal{N}(A^*), 
\quad \mathcal{U}_{max}(A)\subseteq\mathcal{R}(A)\cap \mathcal{N}(A),$
\label{cond:range}
\item $\mathcal{V}_{max}(A) \perp \mathcal{U}_{max}(A)$,
\label{cond:perp}
\item the multiplicity $m$ of $\sigma_{max}$ is not greater than $n/2$, 
\item the multiplicity of the zero singular value is at least $m$.
\end{enumerate}
\end{lem}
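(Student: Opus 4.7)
The plan is to bootstrap everything from Lemma~\ref{lem:four}, which already delivers the pointwise inclusions $v\in\mathcal{R}(A^{*})\cap\mathcal{N}(A^{*})$ and $Av\in\mathcal{R}(A)\cap\mathcal{N}(A)$ for every unit norm $v\in\mathcal{V}_{max}(A)$. These two sets are subspaces, and the conclusion is preserved under scaling, so the inclusion $\mathcal{V}_{max}(A)\subseteq\mathcal{R}(A^{*})\cap\mathcal{N}(A^{*})$ follows at once from the definition of $\mathcal{V}_{max}(A)$ as the linear span of the unit norm right singular vectors for $\sigma_{max}$. For the left counterpart I would fix an orthonormal basis $v_1,\dots,v_m$ of $\mathcal{V}_{max}(A)$, observe that $u_i=Av_i/\|A\|$ is an orthonormal basis of $\mathcal{U}_{max}(A)$, and invoke Lemma~\ref{lem:four} on each $v_i$ to conclude $u_i\in\mathcal{R}(A)\cap\mathcal{N}(A)$. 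Linearity of the subspace on the right then yields $\mathcal{U}_{max}(A)\subseteq\mathcal{R}(A)\cap\mathcal{N}(A)$, proving item~1.

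Item~2 is a direct consequence of item~1 together with the fundamental orthogonality $\mathcal{N}(A^{*})=\mathcal{R}(A)^{\perp}$: since $\mathcal{V}_{max}(A)\subseteq\mathcal{N}(A^{*})$ and $\mathcal{U}_{max}(A)\subseteq\mathcal{R}(A)$, every vector in $\mathcal{V}_{max}(A)$ is orthogonal to every vector in $\mathcal{U}_{max}(A)$.

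For item~3 I would use that the multiplicity $m$ of $\sigma_{max}$ equals $\dim\mathcal{V}_{max}(A)=\dim\mathcal{U}_{max}(A)$. Because these two subspaces are orthogonal inside $\mathbb{C}^{n}$ by item~2, dimension counting gives $2m=\dim\mathcal{V}_{max}(A)+\dim\mathcal{U}_{max}(A)\leq n$, hence $m\leq n/2$. Finally, item~4 follows from the inclusion $\mathcal{U}_{max}(A)\subseteq\mathcal{N}(A)$ in item~1: the multiplicity of the zero singular value is $\dim\mathcal{N}(A)\geq\dim\mathcal{U}_{max}(A)=m$.

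There is essentially no genuine obstacle; the work has already been done in Lemma~\ref{lem:four}. The only point that needs a little care is the transfer from the unit-norm statement of Lemma~\ref{lem:four} to the subspace-level inclusions, which I would handle by the basis argument above rather than by appealing to linearity of the conclusion in $v$ (since the vector $z$ appearing in \eqref{eq:strange-1} depends on $v$ nonlinearly).
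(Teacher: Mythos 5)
Your proposal is correct and follows essentially the same route as the paper: both deduce the subspace inclusions from Lemma~\ref{lem:four} (the paper combining it with the standard facts $\mathcal{V}_{max}(A)\subseteq\mathcal{R}(A^{*})$, $\mathcal{U}_{max}(A)\subseteq\mathcal{R}(A)$), then obtain orthogonality from the fundamental relation between range and null space, and finish items 3 and 4 by the same dimension counts. Your extra care in passing from the pointwise statement of Lemma~\ref{lem:four} to the subspace inclusion via an orthonormal basis of $\mathcal{V}_{max}(A)$ is a minor elaboration of what the paper leaves implicit, not a different argument.
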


\begin{proof}
The first property follows from the condition \eqref{eq:strange-1} in Lemma~\ref{lem:four} 
and the fact that $\mathcal{V}_{max}(A) \subseteq \mathcal{R}(A^{*})$ and $\mathcal{U}_{max}(A) \subseteq \mathcal{R}(A)$. 
Combining $\mathcal{V}_{max}(A) \subseteq \mathcal{R}(A^{*})$ with $\mathcal{U}_{max}(A) \subseteq \mathcal{N}(A)$
yields the orthogonality of maximum singular subspaces. Furthermore,
since $\mathcal{V}_{max}(A), \mathcal{U}_{max}(A) \subseteq \mathbb{C}^{n}$ and
$\mathcal{V}_{max}(A) \perp \mathcal{U}_{max}(A)$, we directly get the restriction on the multiplicity of $\sigma_{max}$. 
Finally, the relation $\mathcal{V}_{max}(A)\subseteq \mathcal{N}(A^*)$ implies that $A$ is singular with
the multiplicity of the zero singular value being equal to or greater than the multiplicity of  $\sigma_{max}$.
\end{proof}

It is worth to note that neither the condition~\ref{cond:perp}
nor the stronger condition~\ref{cond:range} in 
Lemma~\ref{lem:subspaces} are sufficient to ensure that a given matrix 
$A$ is half-radial. This can be illustrated by the following examples.

\begin{exam}\label{ex:1}
Consider a half-radial matrix $A$ such that the multiplicity of $\sigma_{max}$
(and thus also the dimension of maximum singular subspaces) is maximum possible, i.e., $n/2$. Then
\[
\mathcal{V}_{max}(A)  \oplus \mathcal{U}_{max}(A) =  \mathbb{C}^{n},
\]
and $\mathcal{U}_{max}(A)=\mathcal{N}(A)$, $\mathcal{V}_{max}(A)=\mathcal{N}(A^*)$. Thus 
$A$ has only two singular values $\sigma_{max}$ and $0$ both with the multiplicity~$n/2$.
Consequently, an~SVD of half-radial $A$ has in this case the form
\begin{equation}\label{eq:maxradial}
A = U \left[ \begin{array}{cc} \sigma_{max}I & { 0} \\ { 0} & { 0}  \end{array} \right] V^*, \qquad I \in \mathbb{R}^{n/2\times n/2}.
\end{equation}
The unitary matrices above can be chosen as $U = [U_{max}| V_{max}]$, $V = [V_{max}| U_{max}]$, where the columns of
$U_{max}$ and $V_{max}$ form an orthonormal basis of $\mathcal{U}_{max}(A)$ and $\mathcal{V}_{max}(A)$, respectively, 
and $Ue_i, Ve_i$ is for $i=1, \dots, n/2$ a pair of maximum singular vectors of $A$. 

However, assuming only that $A$ has orthogonal maximum right and left singular subspaces, both of the dimension $n/2$, 
we can just conclude that an~SVD of $A$ has in general the form
\begin{equation}\label{eq:maxgeneral}
A = U \left[ \begin{array}{cc} \sigma_{max}I & { 0} \\ { 0} & { \tilde \Sigma}  \end{array} \right] V^*, \qquad I \in \mathbb{R}^{n/2\times n/2},
\end{equation}
with $\tilde \Sigma \in \mathbb{R}^{n/2\times n/2}$ having singular values smaller than $\sigma_{\max}$ on the diagonal.
\end{exam}

The previous example also shows that if the multiplicity of $\sigma_{max}$ is $n/2$, then assuming 
only that the condition \ref{cond:range} from Lemma~\ref{lem:subspaces} holds, the matrix $A$ is half-radial. 
However, this is not true in general as we illustrate by the next example.

\begin{exam}\label{ex:2}
Consider the matrix given by its SVD
\[
A=
\left[\begin{array}{ccc}
0 & 0 & 0\\
0 & 0.9 & 0\\
1 & 0 & 0
\end{array}\right]
=\left[\begin{array}{ccc}
0 & 0 & 1\\
0 & 1 & 0\\
1 & 0 & 0
\end{array}\right]
\left[\begin{array}{ccc}
1 & 0 & 0\\
0 & 0.9 & 0\\
0 & 0 & 0
\end{array}\right]\left[\begin{array}{ccc}
1 & 0 & 0\\
0 & 1 & 0\\
0 & 0 & 1
\end{array}\right].
\]
Obviously, $\mathcal{U}_{max}(A)=span\{e_3\}, \mathcal{V}_{max}(A)=span\{e_1\}$.
Since $Ae_3 = 0$ and $A^*e_1 = 0$, we obtain $\mathcal{U}_{max}(A) \subseteq\mathcal{N}(A)$
and $\mathcal{V}_{max}(A)\subseteq \mathcal{N}(A^*)$. The vector $e_2$ 
is a (unit norm) eigenvector of $A$. Consequently,
$$
r(A) \geq |\left\langle Ae_2,e_2\right\rangle| = |\left\langle 0.9 \, e_2,e_2\right\rangle| = 0.9.
$$
Since $\|A\|=1$, we see that $A$ is not half-radial, even though the condition 
\ref{cond:range} from Lemma~\ref{lem:subspaces} holds.

\end{exam}

\subsection{Set of maximizers $\Theta_{A}$}

Now we study the set of maximizers $\Theta_{A}$.
Lemma~\ref{lem:two} implies that for any $z\in\Theta_{A}$ its component $x$ in $\mathcal{R}(A^*)$ satisfies $x \in \mathcal{V}_{max}(A)$. On the other hand, Lemma~\ref{lem:Rao} says that for half-radial matrices a maximizer from $\Theta_{A}$ can be constructed as a linear combination of the vectors $u,v$ representing a (necessarily orthogonal) pair of maximum singular vectors of $A$.
We are ready to prove that the whole set $\Theta_{A}$ is formed by linear combinations of such vectors, particularly we define the set
\[
\Omega_{A}\equiv\left\{ \frac{1}{\sqrt{2}}\left(e^{\boldsymbol{i}\alpha}v+e^{\boldsymbol{i}\beta}u\right):\ v\in\mathcal{V}_{max}(A),\ \|v\|=1,\ u=\frac{Av}{\|A\|},\ \alpha,\ \beta\in\mathbb{R}\right\}
\]
and get the following lemma.

\begin{lem}
\label{lem:five} $\Theta_{A}$ is either empty or $\Theta_{A}=\Omega_{A}.$
\end{lem}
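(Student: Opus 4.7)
The plan is to handle the trivial case first and then prove the two inclusions separately. If $\Theta_A=\emptyset$ there is nothing to show, so I assume $\Theta_A\neq\emptyset$; Lemma~\ref{lem:two} then guarantees that $A$ is half-radial, which makes all the conclusions of Lemma~\ref{lem:Rao} and Lemma~\ref{lem:subspaces} available.

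For $\Omega_A\subseteq\Theta_A$, I would take an arbitrary $z=\frac{1}{\sqrt{2}}(e^{\boldsymbol{i}\alpha}v+e^{\boldsymbol{i}\beta}u)$ generated by a unit norm $v\in\mathcal{V}_{max}(A)$ and $u=Av/\|A\|$. By Lemma~\ref{lem:Rao} we have $v\in\mathcal{N}(A^*)$, $u\in\mathcal{N}(A)$ and $u\perp v$, so $\|z\|=1$ is immediate, and the expansion of $\langle Az,z\rangle$ loses the three summands $\langle Av,v\rangle$, $\langle Au,v\rangle$, $\langle Au,u\rangle$, collapsing to $\frac{1}{2}e^{\boldsymbol{i}(\alpha-\beta)}\langle Av,u\rangle$, whose modulus equals $\|A\|/2=r(A)$. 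Because the decomposition~\eqref{eq:y} of this $z$ is $x=\frac{e^{\boldsymbol{i}\alpha}}{\sqrt{2}}v\in\mathcal{R}(A^*)$ and $y=\frac{e^{\boldsymbol{i}\beta}}{\sqrt{2}}u\in\mathcal{N}(A)$, and $\langle Ax,x\rangle=\frac{1}{2}\langle Av,v\rangle=0$, the vector $z$ satisfies all three defining conditions of $\Theta_A$.

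For the reverse inclusion $\Theta_A\subseteq\Omega_A$, I would start from $z\in\Theta_A$ and its decomposition $z=x+y$ as in~\eqref{eq:y}. Lemma~\ref{lem:two} supplies three pieces of data: $\|x\|=\|y\|=\frac{1}{\sqrt{2}}$, $\|Ax\|=\|A\|\|x\|$, and $|\langle Ax,y\rangle|=\|Ax\|\|y\|$. The first two immediately say that $v:=\sqrt{2}\,x$ is a unit norm element of $\mathcal{V}_{max}(A)$, and, setting $u:=Av/\|A\|$, we have $Ax=\frac{\|A\|}{\sqrt{2}}u$. The third is exactly the equality case of the Cauchy--Schwarz inequality; since neither $Ax$ nor $y$ is zero, it forces $y$ to be a scalar multiple of $Ax$, and matching norms yields $y=\frac{c}{\sqrt{2}}u$ for some $c\in\mathbb{C}$ with $|c|=1$. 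Writing $c=e^{\boldsymbol{i}\beta}$ and choosing $\alpha=0$ gives $z=\frac{1}{\sqrt{2}}(e^{\boldsymbol{i}\alpha}v+e^{\boldsymbol{i}\beta}u)\in\Omega_A$.

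The only genuine subtlety is the careful invocation of the Cauchy--Schwarz equality case in the second inclusion to pick out $y$ as a rescaled copy of $u$. Once that is handled, the two independent phases appearing in the definition of $\Omega_A$ comfortably absorb the single unimodular degree of freedom that shows up, so no further structural work is required.
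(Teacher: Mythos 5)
Your proof is correct and follows essentially the same approach as the paper: for $\Theta_A\subseteq\Omega_A$, use Lemma~\ref{lem:two} to get the Cauchy--Schwarz equality case and absorb the resulting unimodular scalar into one of the free phases in $\Omega_A$; for $\Omega_A\subseteq\Theta_A$, use Lemma~\ref{lem:Rao} to kill the three degenerate cross-terms in the expansion of $\langle Az,z\rangle$. The only cosmetic difference is that the paper rescales the decomposition so that $x,y$ are unit vectors and rotates $y$ into a singular vector $\tilde y$, while you keep the original scale and rescale $x$ into $v$ instead; this is the same computation.
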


\begin{proof}
Let $\Theta_{A}$ be non-empty. 
Consider for simplicity a unit norm vector $z\in\Theta_{A}$ in the scaled form \eqref{eq:y},
$$
z=\frac{1}{\sqrt{2}}\left(x+y\right),
$$
i.e., $x\perp y$, $\|x\|=\|y\|=1$, $\left\langle Ax,x\right\rangle =0.$
We first show that $z\in\Omega_{A}$.
From Lemma~\ref{lem:two} we know that $x$ 
satisfies $\|Ax\|=\|A\|$, and $Ax$ and $y$ are linearly dependent. Hence,
\[
Ax=\gamma y,\qquad\gamma=\left\langle Ax,y\right\rangle 
\]
with 
$$
|\gamma|= |\left\langle Ax,y\right\rangle|=
\|Ax\| \|y\| = \|A\| \|x\| \|y\| = \|A\| = \sigma_{max}.
$$ 
Now we can rotate the vector $y$ in order to get the pair of maximum singular vectors.
Define the unit norm vector $\tilde{y}$ by 
\[
\tilde{y}\equiv e^{-\boldsymbol{i}\beta}y,\qquad e^{-\boldsymbol{i}\beta}\equiv\frac{\left\langle Ax,y\right\rangle }{\|A\|},
\]
then $Ax=\gamma y=\left(\|A\|e^{-\boldsymbol{i}\beta}\right)\left(e^{\boldsymbol{i}\beta}\tilde{y}\right)=\|A\|\tilde{y}.$ Thus $x, \tilde{y}$ is a pair of maximum singular vectors of $A$. 
Hence, it holds that 
\[
z=\frac{1}{\sqrt{2}}\left(x+y\right)=\frac{1}{\sqrt{2}}\left(x+e^{\boldsymbol{i}\beta}\tilde{y}\right),\qquad x\in\mathcal{V}_{max}(A),\quad\tilde{y}=Ax/\|A\|,
\]
giving $z\in\Omega_{A}$. Therefore, $\Theta_{A}\subseteq\Omega_{A}.$ 

Now we prove the opposite inclusion $\Omega_{A}\subseteq\Theta_{A}$.
Since $\Theta_{A}\neq\{\emptyset\}$, it holds that $\|A\|=2r(A)$ and any vector $z$ of the form
\[
z=\frac{1}{\sqrt{2}}\left(v+u\right),\quad v\in\mathcal{V}_{max}(A),\ \|v\|=1,\ u=Av/\|A\|
\]
is an element of $\Theta_{A}$, see Lemma~\ref{lem:Rao}. Consider now a unit norm vector $\tilde{z}\in\Omega_{A}$ of the form 
\[
\tilde{z}=\frac{1}{\sqrt{2}}\left(e^{\boldsymbol{i}\alpha}v+e^{\boldsymbol{i}\beta}u\right).
\]
We know that $\|u\|=\|v\|=1$, $u\perp v$, and $e^{\boldsymbol{i}\alpha}v\in\mathcal{R}(A^{*})$,
$e^{\boldsymbol{i}\beta}u\in\mathcal{N}(A)$. Moreover, 
\[
\left|\left\langle A\tilde{z},\tilde{z}\right\rangle \right|=\left|\frac{1}{2}\left\langle A\left(e^{\boldsymbol{i}\alpha}v\right),\left(e^{\boldsymbol{i}\beta}u\right)\right\rangle \right|=\frac{1}{2}\left|\left\langle Av,u\right\rangle \right|=\frac{\|A\|}{2}=r(A),
\]
so that $\tilde{z}$ is a maximizer. Since also $\left\langle Ae^{\boldsymbol{i}\alpha}v,e^{\boldsymbol{i}\alpha}v\right\rangle =\left\langle Av,v\right\rangle =0$,
we get $\tilde{z}\in\Theta_{A}$.
\end{proof}

Note that in general, the set of maximizers $\Theta_{A}$ need not represent the set of all maximizers of $|\langle Az,z\rangle|$. 
We illustrate that by an example.

\begin{exam}
Consider the matrix given by its SVD
\[
A=\left[\begin{array}{ccc}
0 & 1 & 0\\
0 & 0 & 0\\
0 & 0 & \frac{1}{2}
\end{array}\right]=\left[\begin{array}{ccc}
1 & 0 & 0\\
0 & 0 & 1\\
0 & 1 & 0
\end{array}\right]\left[\begin{array}{ccc}
1 & 0 & 0\\
0 & \frac{1}{2} & 0\\
0 & 0 & 0
\end{array}\right]\left[\begin{array}{ccc}
0 & 0 & -1\\
1 & 0 & 0\\
0 & 1 & 0
\end{array}\right].
\]
It holds that $\|A\|=1$ and $r(A)=\frac{1}{2}.$ From Lemma~\ref{lem:five},
we see that $\Theta_{A}$ is generated by the maximum right singular
vector $e_{2}$ and the corresponding maximum left singular vector
$e_{1}.$ But obviously, the right singular vector $e_{3}$
corresponding to the singular value $\frac{1}{2}$ is not in $\Theta_{A}$
and also represents a maximizer, since 
\[
e_{3}^{T}Ae_{3}=\frac{1}{2}=\frac{1}{2}\|A\|.
\]
\end{exam}

We have seen that for half-radial matrices the vectors of the form 
\[
\frac{\left(e^{\boldsymbol{i}\alpha}v+e^{\boldsymbol{i}\beta}u\right)}{\|e^{\boldsymbol{i}\alpha}v+e^{\boldsymbol{i}\beta}u\|},
\quad \mbox{with} \quad
\ v\in\mathcal{V}_{max}(A), \ \|v\| =1,\ u=\frac{Av}{\|A\|},\ \alpha,\ \beta\in\mathbb{R},
\]
are maximizers of $|\langle Az,z\rangle|$. This is also true for Hermitian, and, more generally, for normal and radial 
matrices. It remains an open question, whether it is possible to find
larger classes of matrices such that the maximizers are of the form above.

\subsection{Algebraic structure of half-radial matrices}

In \cite[p.11, Theorem 1.3--5]{GuRa1997} it was shown 
that a matrix satisfying $\|A\|=2r(A)$ has a two-dimensional reducing subspace on which it is the shift $J$; see the condition $2$ in Theorem~\ref{thm:knownequiv}. 
Recalling that the multiplicity of the zero singular value of a half-radial matrix is at least the multiplicity of $\sigma_{max}$, see Lemma~\ref{lem:subspaces}, it is possible
to analogously extract as many matrices $J$ from $A$ as is the multiplicity of $\sigma_{max}$. 
Denote by $A \otimes B$ a Kronecker product of $A$ and $B$, and by $A \oplus B$ a block--diagonal matrix with the blocks $A, B$ on the diagonal. 
The following lemma gives a full characterization of half-radial matrices from the point
of view of their algebraic structure.

\begin{lem}
\label{lem:six}Let $A\in\mathbb{C}^{n\times n}$ be a nonzero matrix
such that $\dim\mathcal{V}_{max}(A)=m$. It holds that $\|A\|=2r(A)$ if
and only if $A$ is unitarily similar to the matrix 
\[
(\|A\|I_{m}\otimes J)\oplus B,
\]
where $B$ is a matrix satisfying $\|B\|<\|A\|$ and $r(B)\leq\frac{1}{2}\|A\|.$ 
\end{lem}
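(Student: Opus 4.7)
The plan is to prove both implications separately. For the \emph{if} direction, I rely on the fact that both the 2-norm and the numerical radius are unitary invariants and realize the maximum over blocks of a block-diagonal matrix. A direct computation gives $\|J\|=1$ (from the eigenvalues of $J^*J$) and $r(J)=1/2$ by parametrizing unit vectors in $\mathbb{C}^2$, which shows $W(J)$ is the closed disk of radius $1/2$. Hence $\|A\|I_m\otimes J$ has 2-norm $\|A\|$ and numerical radius $\|A\|/2$; combined with $\|B\|<\|A\|$ and $r(B)\leq \|A\|/2$, the full block matrix has 2-norm $\|A\|$ and numerical radius $\|A\|/2$, so $\|A\|=2r(A)$ follows.

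For the \emph{only if} direction, assume $\|A\|=2r(A)$ and invoke Lemma~\ref{lem:subspaces}, which yields $\mathcal{V}_{max}(A)\subseteq \mathcal{N}(A^*)$, $\mathcal{U}_{max}(A)\subseteq \mathcal{N}(A)$, and $\mathcal{V}_{max}(A)\perp \mathcal{U}_{max}(A)$. Pick an orthonormal basis $\{v_{1},\dots,v_{m}\}$ of $\mathcal{V}_{max}(A)$ and set $u_{i}\equiv Av_{i}/\|A\|$. Using $A^*Av_{i}=\|A\|^{2}v_{i}$, a short inner-product calculation shows that $\{u_{1},\dots,u_{m}\}$ is an orthonormal basis of $\mathcal{U}_{max}(A)$, so the interleaved list $u_{1},v_{1},u_{2},v_{2},\dots,u_{m},v_{m}$ is orthonormal. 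The subspace $\mathcal{S}\equiv \mathcal{V}_{max}(A)\oplus\mathcal{U}_{max}(A)$ they span is reducing for $A$: the four relations $Av_{i}=\|A\|u_{i}$, $Au_{i}=0$, $A^{*}v_{i}=0$, and $A^{*}u_{i}=\|A\|v_{i}$ imply $A\mathcal{S}\subseteq\mathcal{S}$ and $A^{*}\mathcal{S}\subseteq\mathcal{S}$, hence $\mathcal{S}^{\perp}$ is also $A$-invariant.

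Extending the orthonormal set by any basis $w_{1},\dots,w_{n-2m}$ of $\mathcal{S}^{\perp}$, the unitary $U$ with columns $u_{1},v_{1},\dots,u_{m},v_{m},w_{1},\dots,w_{n-2m}$ block-diagonalizes $A$. On each two-dimensional invariant piece $\mathrm{span}\{u_{i},v_{i}\}$, the relations $Au_{i}=0$ and $Av_{i}=\|A\|u_{i}$ produce the matrix $\|A\|J$, so the top block equals $\|A\|I_{m}\otimes J$; let $B$ be the restriction of $A$ to $\mathcal{S}^{\perp}$. To verify the stated properties of $B$, note that the singular values of a block-diagonal matrix are the union, with multiplicity, of those of the blocks; the top block already accounts for $\sigma_{max}=\|A\|$ with its full multiplicity $m=\dim\mathcal{V}_{max}(A)$, so no singular value of $B$ equals $\|A\|$, giving $\|B\|<\|A\|$. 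For the numerical radius, any unit $z\in\mathcal{S}^{\perp}$ satisfies $\left\langle Bz,z\right\rangle =\left\langle Az,z\right\rangle \in W(A)$, hence $r(B)\leq r(A)=\|A\|/2$. The one delicate point is the interleaved ordering of the columns of $U$: swapping $u_{i}$ and $v_{i}$ would produce $J^{T}$ in place of $J$, so the ordering $u_{i},v_{i}$ is essential to obtain the block $\|A\|I_{m}\otimes J$ literally.
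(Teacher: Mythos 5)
Your proof is correct and follows essentially the same route as the paper: you build the unitary from an interleaved orthonormal basis $u_1,v_1,\dots,u_m,v_m$ coming from Lemma~\ref{lem:subspaces}, observe that $\mathrm{span}\{u_i,v_i\}$ is reducing and carries $\|A\|J$, and bound $\|B\|$ by singular-value counting. The one place you improve on the paper's exposition is the bound $r(B)\le \tfrac12\|A\|$: the paper only appeals to ``a technique analogous to'' Gustafson--Rao, whereas you give the cleaner and self-contained observation that $B$ is a compression of $A$ to $\mathcal{S}^\perp$, so $W(B)\subseteq W(A)$ and $r(B)\le r(A)$. For the ``if'' direction the paper invokes $W(C\oplus D)=\mathrm{cvx}(W(C)\cup W(D))$ while you use $r(C\oplus D)=\max\{r(C),r(D)\}$ and $\|C\oplus D\|=\max\{\|C\|,\|D\|\}$; these are interchangeable and lead to the same conclusion.
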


\begin{proof}
Assume that $\|A\|=2r(A)$. Let $u_i,v_i, i=1, \dots, m$ be pairs of maximum
singular vectors of $A$ such that  $\{v_{1},\dots,v_{m}\}$ is an orthonormal
basis of $\mathcal{V}_{max}(A)$ and $\{u_{1},\dots,u_{m}\}$ is an  
orthonormal basis of $\mathcal{U}_{max}(A)$. Define 
\[
Q\equiv[u_{1},v_{1},u_{2},v_{2},\dots,u_{m},v_{m},P],
\]
where $P$ is any matrix such that $Q$ is unitary (the vectors 
$v_{1},\dots,v_{m},u_{1},\dots,u_{m}$ are mutually orthogonal by Lemma~\ref{lem:subspaces}). Then 
\[
Q^{*}AQ=(\|A\|I_{m}\otimes J)\oplus B,\qquad B=P^{*}AP.
\]
Since the maximum singular value of $A$ corresponds to the block $\|A\|I_{m}\otimes J$, the maximum singular value of $B$ 
is smaller than $\sigma_{max}=\|A\|$. Using a technique analogous to \cite[p.11, Theorem 1.3--5]{GuRa1997}, we also get $r(B)\leq\frac{1}{2}\|A\|.$ 

On the other hand, let $A$ be unitarily similar to the matrix 
\[
(\|A\|I_{m}\otimes J)\oplus B
\]
with $r(B)\leq\frac{1}{2}\|A\|$. 
Recall that for any square matrices $C$ and $D$ it holds that $W(C\oplus D)=\mathrm{cvx}(W(C)\cup W(D))$; see, 
e.g., \cite[p.~12]{B:HoJo1994}. 
Since $W(\|A\| J)$ is the disk with the center at zero and the radius $\frac{1}{2}\|A\|$, and $r(B)\leq\frac{1}{2}\|A\|$, it holds that
$$
 r(A)=\max\{r(\|A\| J),r(B)\}=\frac{1}{2}\|A\|,
$$
Hence, $A$ is half-radial.
\end{proof}

It is well known that $W(J)$ is the disk with the center at zero and 
the radius $\frac{1}{2}$. Since $\|J\|=1$, the matrix $J$ is half-radial.
Thus in words, Lemma~\ref{lem:six} shows that a matrix $A$ is half-radial if and only if it is unitarily similar to a block diagonal matrix with one block being a half-radial matrix with maximum multiplicity of $\sigma_{max}$ (see \eqref{eq:maxradial}) and the other block having smaller norm and numerical radius.

We have discussed previously that orthogonality of maximum singular subspaces does not ensure
that a given matrix $A$ is half-radial (see Example~\ref{ex:1}). Note that assuming only 
$\mathcal{U}_{max}(A) \perp \mathcal{V}_{max}(A)$, we can also get
the block structure of~$A=(\|A\|I_{m}\otimes J)\oplus B$ with $\|B\|<\|A\|$.
However, if $W(B)$ is not contained in $W(\|A\|J)$, then 
$$
r(A)=r(B)> \frac{1}{2}\|A\|,
$$ 
and $A$ is not half-radial. This can be illustrated on Example~\ref{ex:2}, where 
clearly $B = 0.9$ and thus $\|B\| = r(B) = 0.9$ while $\|A\|=1$. \\

Note that Lemma~\ref{lem:six} immediately implies the condition 3 from Theorem~\ref{thm:knownequiv}. 
In particular, for half-radial matrices it holds that
\[
W(A)=\mathrm{cvx}(W(\|A\|J)\cup W(B))=\|A\| W(J).
\]
Moreover, since $W\left(\frac{A^*+A}{2}\right)=\mathrm{Re}\,(W(A))$, for a half-radial $A$ we always have $r\left(\frac{A^*+A}{2}\right)=r(A)$, i.e., $\|A^*+A\| =\|A\|$. \\

To summarize the main results, the following theorem extends Theorem~\ref{thm:knownequiv} by giving several necessary and sufficient conditions characterizing half-radial matrices. 

\begin{thm}
Let $A\in\mathbb{C}^{n\times n}$ be a nonzero matrix.
Then the following conditions are equivalent:
\end{thm}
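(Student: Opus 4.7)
The plan is to consolidate into a single theorem the equivalences established piecewise in Lemmas~\ref{lem:two}--\ref{lem:six} together with the three conditions of Theorem~\ref{thm:knownequiv}, and then to verify the equivalences by chaining the corresponding lemmas. Based on what has been developed in Section~\ref{sec:conditions}, the natural list of equivalent conditions is: (i) $\|A\|=2r(A)$; (ii) the set $\Theta_{A}$ is non-empty; (iii) $\Theta_{A}=\Omega_{A}$; (iv) there exists a unit norm $v\in\mathcal{V}_{max}(A)$ satisfying the singular-subspace condition \eqref{eq:strange-1}, or equivalently every such $v$ satisfies it; (v) $A$ is unitarily similar to $(\|A\|I_{m}\otimes J)\oplus B$ with $\|B\|<\|A\|$ and $r(B)\leq\|A\|/2$, where $m=\dim\mathcal{V}_{max}(A)$; and (vi) $W(A)$ is the closed disk centered at zero with radius $\|A\|/2$.

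My approach is to organise the proof as a cycle of implications in which every arrow is a one-line appeal to a previously proved result, avoiding any fresh computation. Specifically, the implication (i)$\Rightarrow$(ii) is Lemma~\ref{lem:Rao}, and (ii)$\Rightarrow$(i) is Lemma~\ref{lem:two}, so (i) and (ii) are equivalent. Once $\Theta_{A}$ is non-empty, Lemma~\ref{lem:five} upgrades (ii) to $\Theta_{A}=\Omega_{A}$, giving (ii)$\Rightarrow$(iii); the reverse (iii)$\Rightarrow$(ii) is trivial because $\Omega_{A}$ is non-empty as soon as $\mathcal{V}_{max}(A)$ is (and $A$ is nonzero). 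The equivalence (i)$\Leftrightarrow$(iv) is exactly the content of Lemma~\ref{lem:four}, and (i)$\Leftrightarrow$(v) is Lemma~\ref{lem:six}. Finally, (i)$\Leftrightarrow$(vi) is condition~3 of Theorem~\ref{thm:knownequiv}; alternatively, it can be read directly off (v) using $W(C\oplus D)=\mathrm{cvx}(W(C)\cup W(D))$ together with the fact that $W(J)$ is the closed disk of radius $1/2$, as was already done inside the proof of Lemma~\ref{lem:six}.

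The main obstacle, as I see it, is not technical but presentational: one must handle carefully the ``for some'' versus ``for every'' formulation of condition~(iv). To do so I would note, via Lemma~\ref{lem:four}, that if the subspace conditions in \eqref{eq:strange-1} hold for one unit norm $v\in\mathcal{V}_{max}(A)$ then $A$ is half-radial, and hence by the forward direction of the same lemma these conditions propagate to every unit norm vector in $\mathcal{V}_{max}(A)$, so both readings are genuinely equivalent. Beyond this bookkeeping, no new estimate on $r(A)$ or $\|A\|$ is required; every implication of the cycle is already contained in the previously established lemmas of Section~\ref{sec:conditions}, and the proof reduces to assembling the citations in the right order.
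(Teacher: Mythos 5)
Your proposal is correct and follows exactly the route the paper implicitly takes: the theorem is a consolidation of Lemmas~\ref{lem:two}, \ref{lem:Rao}, \ref{lem:four}, \ref{lem:five}, \ref{lem:six} and condition~3 of Theorem~\ref{thm:knownequiv}, with no new estimates required. Your observation that $\Omega_{A}$ is automatically non-empty for a nonzero matrix (so $\Theta_{A}=\Omega_{A}$ gives back non-emptiness of $\Theta_{A}$) and your handling of the ``for some'' versus ``for every'' reading via both directions of Lemma~\ref{lem:four} close the only small bookkeeping gaps, so nothing is missing.
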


\begin{enumerate}
\item $\|A\|=2r(A)$, 
\item $\Theta_{A}$ is non-empty,
\item $\Theta_{A}=\Omega_{A}$,
\item $\forall v\in\mathcal{V}_{max}(A)$, $\|v\| = 1$, it holds that 
\begin{equation}\label{eq:strange-1a}
\begin{split}
v\in\mathcal{R}(A^{*})\cap\mathcal{N}(A^{*}),\qquad Av\in\mathcal{R}(A)\cap\mathcal{N}(A), \\
\mbox{and} \quad z = \frac{1}{\sqrt{2}} \left(v + \frac{Av}{\|A\|}\right) \quad \mbox{maximizes} \quad |\left\langle Az,z\right\rangle|,
\end{split}
\end{equation}
\item $\exists v\in\mathcal{V}_{max}(A)$, $\|v\|=1$, such that \eqref{eq:strange-1a} holds,
\item $A$ is unitarily similar to the matrix
\[
(\|A\|I_{m}\otimes J)\oplus B,
\]
where $m$ is the dimension of $\mathcal{V}_{max}(A)$, $\|B\|<\|A\|$ and
$r(B)\leq\frac{1}{2}\|A\|.$ 
\item $W(A)$ is the disk with the center at zero and the radius $\frac{1}{2}\|A\|.$
\end{enumerate}

\section{Consequences on Crouzeix's conjecture}\label{sec:consequency}

In \cite{Cr2007}, Crouzeix proved that for any square matrix $A$
and any polynomial~$p$
\begin{equation}
\|p(A)\|\leq c \max_{\zeta\in W(A)}|p(\zeta)|,\label{eq:crouzeix1}
\end{equation}
where $c=11.08$. Later in \cite{Cr2004} he conjectured that the constant could be replaced by $c=2$. Recently, it has been proven in \cite{CrPa2017} that the inequality 
\eqref{eq:crouzeix1} holds with the constant $c=1+\sqrt{2}$. Still, it remains an open question whether {\it Crouzeix's inequality} 
\begin{equation}
\|p(A)\|\leq2\max_{\zeta\in W(A)}|p(\zeta)|\label{eq:crouzeix}
\end{equation}
is satisfied for any square matrix $A$ and any polynomial $p$.

Based on the results in \cite{OkAn1975}, it has been shown in \cite{BaCrDe2006}
that if the field of values $W(A)$ is a disk, then \eqref{eq:crouzeix}
holds. Thus, for half-radial matrices we conclude the following.

\begin{lem}\label{lem:cr1}\
Half-radial matrices satisfy Crouzeix's inequality~\eqref{eq:crouzeix}. Moreover, the bound with the constant $2$ 
is attained for the polynomial $p(\zeta)=\zeta$. 
\end{lem}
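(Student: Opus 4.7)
The plan is to split the statement into its two assertions and prove each one directly from results already available in the excerpt, so there is essentially no obstacle here—the work has been done by the characterization theorem just proved.

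For the first assertion, I would invoke condition~7 of the main characterization theorem: since $A$ is half-radial, $W(A)$ is the disk centered at zero of radius $\tfrac{1}{2}\|A\|$. The statement cited from \cite{BaCrDe2006} (based on \cite{OkAn1975}) asserts that whenever $W(A)$ is a disk, Crouzeix's inequality \eqref{eq:crouzeix} with constant $2$ holds for every polynomial $p$. Applying this to a half-radial $A$ yields \eqref{eq:crouzeix} immediately, so the first sentence of the lemma reduces to a one-line citation argument.

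For the second assertion, I would take the specific polynomial $p(\zeta)=\zeta$. Then $p(A)=A$, so the left-hand side of \eqref{eq:crouzeix} is $\|A\|$. On the right-hand side,
\begin{equation*}
\max_{\zeta\in W(A)}|p(\zeta)| \;=\; \max_{\zeta\in W(A)}|\zeta| \;=\; r(A) \;=\; \tfrac{1}{2}\|A\|,
\end{equation*}
where the last equality is the definition of half-radial. Multiplying by $2$ gives $\|A\|$, exactly matching the left-hand side, which shows that the inequality \eqref{eq:crouzeix} is saturated for this $p$.

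Putting the two pieces together proves the lemma. The only ``hard'' step, if one can call it that, is recognizing that the extremal polynomial is simply the identity monomial; everything else follows by chaining already-established facts—condition~7 of the characterization theorem for the inequality itself, and the defining identity $r(A)=\tfrac{1}{2}\|A\|$ for tightness. No new estimates or constructions are needed.
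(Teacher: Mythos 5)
Your proof is correct and follows exactly the same two-step approach as the paper: citing the disk property of $W(A)$ together with the result of Badea--Crouzeix--Delyon/Okubo--Ando for the inequality, and then directly verifying equality for $p(\zeta)=\zeta$ via $\|A\|=2r(A)$. No differences worth noting.
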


\begin{proof}
Let $A$ be half-radial. Then $W(A)$ is a disk giving the inequality \eqref{eq:crouzeix}.
Furthermore, for $p(\zeta)=\zeta$ we have 
\[
\|p(A)\|=\|A\|=2r(A)=2\max_{\zeta\in W(A)}|\zeta|=2\max_{\zeta\in W(A)}|p(\zeta)|.
\]
\end{proof}

There are other matrices for which the inequality \eqref{eq:crouzeix} holds and  the bound is attainable for
some polynomial $p$. In particular, for the Crabb-Choi-Crouzeix matrix of the form,
\begin{equation}\label{eqn:CC}
C_1=2J,\quad C_n=\left[\begin{array}{cccccc}
0 & \sqrt{2}\\
 & 0 & 1\\
 &  & 0 & \ddots\\
 &  &  & 0 & 1\\
 &  &  &  & 0 & \sqrt{2}\\
 &  &  &  &  & 0
\end{array}\right]\in\mathbb{R}^{(n+1)\times (n+1)},\ n > 1,
\end{equation}
where $J$ is defined in \eqref{eq:jordblock},
the polynomial is $p(\zeta)=\zeta^{n}$; see \cite{Ch2013} and \cite{GrOv2018}. Note that 
$C_n^{n}=2e_{1}e_{n+1}^{T}$,
so that $C_n^{n}$ is unitarily similar (via a permutation matrix) to the matrix $2J\oplus0$.
Hence
\[
	\|C_n^n\|=2r(C_n^n),
\]
and thus $C_n^{n}$ is half-radial. More generally, we can prove the following result.
\begin{lem}\label{lem:one}
Let an integer $k\geq 1$ be given. It holds that  
\begin{equation}\label{eqn:Ak}
	\|p(A)\|=2\max_{\zeta\in W(A)}|p(\zeta)|
\end{equation}
for $p(\zeta)=\zeta^k$ if and only if $A^{k}$ is half-radial
and $r(A^k) = r(A)^k$.
\end{lem}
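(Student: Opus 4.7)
The plan is to rewrite the condition \eqref{eqn:Ak} in a form that involves only the scalars $\|A^k\|$, $r(A^k)$, and $r(A)^k$, and then extract both claimed properties from a chain of two-sided inequalities.

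First I would simplify the right-hand side: since $|\zeta^k| = |\zeta|^k$, we have
\[
\max_{\zeta\in W(A)}|p(\zeta)| = \max_{\zeta\in W(A)}|\zeta|^k = \bigl(\max_{\zeta\in W(A)}|\zeta|\bigr)^k = r(A)^k.
\]
So \eqref{eqn:Ak} is equivalent to $\|A^k\| = 2\,r(A)^k$.

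The ``if'' direction is then immediate: if $A^k$ is half-radial and $r(A^k) = r(A)^k$, then $\|A^k\| = 2\,r(A^k) = 2\,r(A)^k$, which is \eqref{eqn:Ak}.

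For the ``only if'' direction, the key ingredient is Berger's power inequality $r(A^k) \leq r(A)^k$, which holds for every square matrix and every positive integer $k$. Combining it with the basic inequality \eqref{eq:basic} applied to $A^k$, namely $\|A^k\| \leq 2\,r(A^k)$, and with the hypothesis $\|A^k\| = 2\,r(A)^k$, I obtain the chain
\[
2\,r(A)^k = \|A^k\| \leq 2\,r(A^k) \leq 2\,r(A)^k.
\]
All three quantities must therefore coincide, which simultaneously yields $\|A^k\| = 2\,r(A^k)$ (so $A^k$ is half-radial by definition) and $r(A^k) = r(A)^k$.

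The only non-routine ingredient is Berger's power inequality; the rest is bookkeeping with \eqref{eq:basic} and the observation $\max_{\zeta\in W(A)}|\zeta^k| = r(A)^k$. I expect the write-up to be short, with the main subtlety being to cite (or recall) Berger's inequality clearly, since without it the chain of inequalities giving $r(A^k) = r(A)^k$ would not close.
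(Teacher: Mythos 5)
Your proof is correct and follows essentially the same route as the paper: both reduce the right-hand side to $r(A)^k$, then close the chain $\|A^k\|\le 2r(A^k)\le 2r(A)^k$ using the power inequality $r(A^k)\le r(A)^k$ (which the paper invokes without naming it as Berger's) together with the two-sided bound from \eqref{eq:basic}, and read off both conclusions from the forced equalities.
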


\begin{proof} Using $r(A^{k})\leq r(A)^{k}$ we obtain
\begin{equation}\label{eqn:cink}
\|A^k\| \leq 2r(A^k) \leq 2r(A)^k = 2 \max_{\zeta\in W(A)}|\zeta^{k}|.
\end{equation}
If \eqref{eqn:Ak} holds for $p(\zeta)=\zeta^k$, then all terms in 
\eqref{eqn:Ak} are equal, and, therefore, 
$A^{k}$ is half-radial and $r(A^k) = r(A)^k$.
On the other hand, if $A^{k}$ is half-radial and $r(A^k) = r(A)^k$, then 
all terms in  \eqref{eqn:cink} are equal, and, 
\eqref{eqn:Ak} holds.
\end{proof}

In \cite[p.~239]{GrOv2018}, Greenbaum and Overton conjectured that if the equality 
\eqref{eqn:Ak} holds for $A\in\mathbb{C}^{(n+1)\times (n+1)}$ and 
the polynomial $p(\zeta)=\zeta^{n}$, then $\alpha A$ (for some nonzero scalar $\alpha$) is unitarily similar to the Crabb-Choi-Crouzeix matrix~\eqref{eqn:CC}.
This result follows from Crabb's theorem \cite[Theorem~2]{Cr1971}, as it is shown
for completeness in Lemma~\ref{lem:crabmatrix} below.\footnote{Abbas Salemi pointed out Crabb's theorem to Greenbaum and Overton, who then conveyed the information to us.
}

\begin{thm}{(Crabb \cite[Theorem~2]{Cr1971})}\label{thm:crabb}
Let $A$ be a bounded linear operator on a Hilbert space $H$, and let $v\in H$. Suppose
that $r(A)=\|v\|=1$ and that $\|A^{k}v\|=2$ for some integer $k$.
Then $A^{k+1}v=0$, $\|A^{i}v\|=\sqrt{2}$, $i=1,2,\dots,k-1$, the
elements $v,Av,\dots,A^{k}v$ are mutually orthogonal, and their linear
span is a reducing subspace of $A$.
\end{thm}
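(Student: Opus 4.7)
The plan is to first use the hypothesis $\|A^kv\|=2$ together with Berger's numerical radius power inequality to put $A^k$ into the half-radial regime, and then propagate the resulting structure back to each intermediate iterate $A^jv$. Since $r(A^k)\le r(A)^k=1$, we have $\|A^k\|\le 2\,r(A^k)\le 2$; combined with $\|A^kv\|\le\|A^k\|\,\|v\|$, the hypothesis forces equality throughout, so $\|A^k\|=2\,r(A^k)=2$ and $v$ is a unit maximum right singular vector of the half-radial operator $A^k$. Lemma~\ref{lem:Rao} applied to $A^k$, with paired left singular vector $u=A^kv/2$, immediately yields $(A^*)^kv=0$, $A^{2k}v=0$, and the orthogonality $v\perp A^kv$.

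The more delicate step is obtaining the intermediate information $\|A^jv\|=\sqrt{2}$ for $1\le j\le k-1$, the mutual orthogonality of all iterates, and $A^{k+1}v=0$. My plan is to use the Hermitian form of the power inequality: $r(A^m)\le 1$ means that $\bigl\|e^{\mathbf{i}\theta}A^m+e^{-\mathbf{i}\theta}(A^*)^m\bigr\|\le 2$ for every integer $m\ge 1$ and every $\theta\in\mathbb{R}$. Squaring this bound and evaluating at unit-norm linear combinations of the iterates $\{A^jv\}_{j=0}^{k}$, then optimising over $\theta$, yields coupled bilinear inequalities in the quantities $\|A^jv\|^2$ and $\langle A^iv,A^jv\rangle$. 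The hypothesis $\|A^kv\|=2$ saturates these inequalities, and successive equality-pressing---starting from the relations $(A^*)^kv=0$ and $A^{2k}v=0$ already established---pins down all the required scalar products. In particular, $A^{k+1}v=0$ will follow from the orthogonalities $\langle A^{k+1}v,A^jv\rangle=0$ for $0\le j\le k$ combined with the bound $\|A^{k+1}v\|\le\|A\|\,\|A^kv\|$ and the norm identities just established.

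With the norms and orthogonalities in hand, $V=\operatorname{span}\{v,Av,\dots,A^kv\}$ is automatically $A$-invariant because $A(A^jv)=A^{j+1}v\in V$ for $j<k$ and $A(A^kv)=0$. To promote $V$ to a reducing subspace I would verify $A^*A^jv\in V$ for every $j$ by symmetrisation: the identity $(A^*)^kA^kv=\|A^k\|^2 v=4v$ (immediate since $v$ is a unit maximum right singular vector of $A^k$) shows that $u=A^kv/2$ satisfies the Crabb hypotheses for $A^*$ with $\|(A^*)^ku\|=2$, and rerunning the argument for $A^*$ with the vector $u$ returns adjoint iterates whose linear span coincides with $V$, giving $A^*V\subseteq V$. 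The main obstacle is the middle paragraph: the simultaneous equality-pressing in the power inequalities that produces the entire chain of orthogonalities, the intermediate norms $\sqrt{2}$, and the annihilation $A^{k+1}v=0$ is the combinatorial heart of Crabb's original 1971 argument, and the half-radial lemmas of Section~\ref{sec:conditions} streamline only its first step rather than automating the inductive descent.
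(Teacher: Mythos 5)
The paper does not prove this theorem: it is quoted directly from Crabb \cite{Cr1971} and used as an external ingredient, so there is no internal proof to compare against. Evaluated on its own terms, your attempt opens well. From $r(A^k)\le r(A)^k=1$, $\|A^k\|\le 2r(A^k)\le 2$, and $\|A^kv\|=2$ you correctly force $\|A^k\|=2r(A^k)=2$, so $A^k$ is half-radial with $v$ a maximum right singular vector, and Lemma~\ref{lem:Rao} applied to $A^k$ yields $(A^*)^kv=0$, $A^{2k}v=0$, and $v\perp A^kv$. That is a clean use of the paper's machinery, and the observation $(A^*)^kA^kv=4v$ (so $u=A^kv/2$ satisfies the same hypotheses for $A^*$) is also correct.

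The gap is in everything after that. Your middle paragraph is a statement of intent, not an argument: the coupled inequalities are never written down, the ``equality-pressing'' is never carried out, and none of the three substantive conclusions $\|A^jv\|=\sqrt 2$ for $1\le j\le k-1$, mutual orthogonality, and $A^{k+1}v=0$ is actually derived. The bound $\|A^{k+1}v\|\le\|A\|\,\|A^kv\|$ only gives $\|A^{k+1}v\|\le 4$ and cannot force annihilation, nor does orthogonality of $A^{k+1}v$ to $\mathrm{span}\{v,\dots,A^kv\}$ by itself force it to vanish---it could be a nonzero vector orthogonal to that span. You concede this yourself in the last sentence: the ``combinatorial heart'' is exactly what is being asked, and it is missing. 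The final step has an additional defect: invoking the reducing-subspace conclusion of Crabb's theorem for $A^*$ and $u$ to establish the reducing-subspace conclusion for $A$ and $v$ is circular, and even if one only imports the norm and orthogonality data for $A^*$, the claim that $\mathrm{span}\{u,A^*u,\dots,(A^*)^ku\}$ coincides with $\mathrm{span}\{v,Av,\dots,A^kv\}$---equivalently, that $A^*A^jv$ lies in $V$ for each $j$---is asserted without justification, and it is precisely the delicate part of proving that $V$ is reducing.
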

Note that since $r(A)=1$
in Theorem~\ref{thm:crabb}, the condition $\|A^{k}v\|=2$ implies that 
$\|A^{k}\|=2$.
Using Crabb's theorem, the following result follows easily. 
%Restricting for simplicity to the case where $A$ is a matrix with
%$r(A)=1$, the following equivalence is obtained.

\begin{lem}\label{lem:crabmatrix}
Let $A\in\mathbb{C}^{(n+1)\times (n+1)}$ be a nonzero matrix
scaled such that $r(A)=1$. It holds that 
\begin{equation}
\|A^{n}\|=2\max_{\zeta\in W(A)}|\zeta^{n}|\label{eq:test}
\end{equation}
if and only if $A$ is unitarily similar to the Crabb-Choi-Crouzeix matrix $C_n$ defined in \eqref{eqn:CC}.
\end{lem}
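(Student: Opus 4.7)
The plan is to prove the two implications separately, with the forward (``only if'') direction carrying the real content and the reverse direction being a direct verification.

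For the reverse direction, assume $A$ is unitarily similar to $C_n$. Then $r(A)=r(C_n)$, so the scaling assumption $r(A)=1$ forces $r(C_n)=1$ (a standard fact about the Crabb-Choi-Crouzeix matrix). Since the excerpt already notes $C_n^n=2e_1 e_{n+1}^T$, we immediately get $\|A^n\|=\|C_n^n\|=2$. Combined with $\max_{\zeta\in W(A)}|\zeta^n|=r(A)^n=1$, this gives \eqref{eq:test}.

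For the forward direction, the strategy is to apply Crabb's theorem (Theorem~\ref{thm:crabb}) to produce an orthonormal basis in which $A$ is represented exactly by $C_n$. First, the hypothesis $\|A^n\|=2\max_{\zeta\in W(A)}|\zeta^n|$ together with $r(A)=1$ gives $\|A^n\|=2$, so there is a unit vector $v$ with $\|A^n v\|=2$. Applying Theorem~\ref{thm:crabb} with $k=n$ then yields $A^{n+1}v=0$, $\|A^i v\|=\sqrt{2}$ for $i=1,\dots,n-1$, and mutual orthogonality of $v,Av,\dots,A^n v$, whose linear span is a reducing subspace for $A$. Since this span is $(n+1)$-dimensional and $A$ acts on $\mathbb{C}^{n+1}$, the reducing subspace must be the whole space.

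Next, I would normalize: set $q_0=v$, $q_n=\tfrac{1}{2}A^n v$, and $q_i=\tfrac{1}{\sqrt{2}}A^i v$ for $1\le i\le n-1$. The orthonormality of $\{q_0,\dots,q_n\}$ is immediate from Crabb's theorem. A direct computation then gives $Aq_0=\sqrt{2}\,q_1$, $Aq_i=q_{i+1}$ for $1\le i\le n-2$, $Aq_{n-1}=\sqrt{2}\,q_n$, and $Aq_n=0$. Comparing these relations with the action of $C_n$ on the standard basis, i.e., $C_n e_1=0$, $C_n e_2=\sqrt{2}\,e_1$, $C_n e_j=e_{j-1}$ for $3\le j\le n$, and $C_n e_{n+1}=\sqrt{2}\,e_n$, I would set $U\equiv[q_n,q_{n-1},\dots,q_1,q_0]$; then $U$ is unitary and $U^*AU=C_n$.

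I do not expect a major obstacle: once Crabb's theorem is invoked, the argument reduces to bookkeeping of normalization constants and a reversal of basis ordering to match the upper-triangular form of $C_n$. The only subtlety worth emphasizing is the dimension-counting step that upgrades the reducing subspace in Crabb's theorem to the whole of $\mathbb{C}^{n+1}$, which is precisely what allows the similarity to be realized on all of $A$ rather than only on an invariant piece.
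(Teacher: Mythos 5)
Your proposal is correct and follows essentially the same route as the paper: compute $\|A^{n}\|=2$ from the hypothesis and $r(A)=1$, pick a unit vector $v$ attaining $\|A^{n}v\|=2$, invoke Crabb's theorem with $k=n$, and assemble the normalized vectors $v, Av/\sqrt{2},\dots, A^{n-1}v/\sqrt{2}, A^{n}v/2$ (in reversed order) into a unitary matrix $Q$ satisfying $AQ=QC_{n}$. The paper's $Q$ is exactly your $U$; the only cosmetic differences are that the paper obtains $\|A^n\|=2$ via Lemma~\ref{lem:one} while you compute it directly, and you remark on the reducing subspace being all of $\mathbb{C}^{n+1}$, which is automatic once one has $n+1$ orthonormal vectors in an $(n+1)$-dimensional space.
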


\begin{proof}
Suppose that \eqref{eq:test} holds. Then, using Lemma~\ref{lem:one}, $A^{n}$
is half-radial with $\|A^{n}\|=2r(A^{n})=2r(A)^{n}=2$.
Let $v$ be a maximum right singular vector of $A^{n}$, i.e. $\|A^{n}v\| = 2$.
Using Crabb's Theorem~\ref{thm:crabb} for $k = n$, the matrix 
\begin{equation} \label{eq:crabbQ}
Q\equiv\left[\frac{A^{n}v}{2},\frac{A^{n-1}v}{\sqrt{2}},\dots,\frac{Av}{\sqrt{2}},v\right]
\end{equation} 
is unitary. Furthermore, 
\[
AQ=\left[0,\frac{A^{n}v}{\sqrt{2}},\frac{A^{n-1}v}{\sqrt{2}},\dots,\frac{A^{2}v}{\sqrt{2}},Av\right]=QC_{n}.
\]
Therefore, $A$ is unitarily similar to $C_{n}$.

Since $C^n_n$ is half-radial, $C_{n}$ satisfies \eqref{eq:test} giving directly the other implication.
\end{proof}

Finally, we are ready to prove a generalization of Lemma~\ref{lem:crabmatrix}
for the $k$th power of $A$, $1\leq k\leq n$.

\begin{thm}\label{thm:final}
Let $A\in\mathbb{C}^{(n+1)\times (n+1)}$ be a nonzero matrix.
It holds that 
\begin{equation}
\|A^{k}\|=2\max_{\zeta\in W(A)}|\zeta^{k}|\label{eq:test-1}
\end{equation}
for some $1\leq k\leq n$ if and only if $A$ is unitarily similar
to the matrix
\begin{equation}
r(A)\left(C_{k}\oplus B\right), \label{eq:final}
\end{equation}
where $C_k$ is the Crabb-Choi-Crouzeix matrix defined in \eqref{eqn:CC} and $B$ is a square matrix of the size $n-k$ satisfying $r(B)\leq1$
and $\|B^{k}\|\leq2$.
\end{thm}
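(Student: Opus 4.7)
The plan is to scale $A$ so that $r(A)=1$, adapt the argument from Lemma~\ref{lem:crabmatrix} to extract a $(k+1)$-dimensional reducing subspace on which $A$ acts as $C_k$, and then use the complementary reducing subspace to obtain the block-diagonal form $C_k\oplus B$. The reverse direction will follow from direct computations of $W(A)$ and $\|A^k\|$ in that block form.

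For the forward direction, I would assume \eqref{eq:test-1} and scale $A$ so that $r(A)=1$ (which is possible since $A$ is nonzero and therefore $r(A)>0$). By Lemma~\ref{lem:one}, $A^k$ is then half-radial with $r(A^k)=r(A)^k=1$ and $\|A^k\|=2$. I pick a unit-norm maximum right singular vector $v$ of $A^k$, so that $\|A^k v\|=2$, and invoke Crabb's Theorem~\ref{thm:crabb} with this $v$: the vectors
\[
\frac{A^k v}{2},\ \frac{A^{k-1}v}{\sqrt{2}},\ \dots,\ \frac{Av}{\sqrt{2}},\ v
\]
are orthonormal and span a reducing subspace $\mathcal{M}$ of $A$ of dimension $k+1$. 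Let $Q_1$ be the isometry whose columns are these vectors, as in \eqref{eq:crabbQ}, and extend to a unitary $Q=[Q_1,P]$. A column-by-column computation identical to the one in Lemma~\ref{lem:crabmatrix} (using $A^{k+1}v=0$) yields $AQ_1=Q_1C_k$. Since $\mathcal{M}$ is reducing, $\mathcal{R}(P)=\mathcal{M}^{\perp}$ is also $A$-invariant, so $AP=PB$ with $B=P^*AP$, and hence $Q^*AQ=C_k\oplus B$. The bounds on $B$ then follow from the block structure: $r(A)=\max\{r(C_k),r(B)\}$ combined with $r(C_k)=1=r(A)$ gives $r(B)\leq 1$, and $\|A^k\|=\max\{\|C_k^k\|,\|B^k\|\}=2$ combined with $\|C_k^k\|=2$ forces $\|B^k\|\leq 2$. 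Restoring the scale yields $A$ unitarily similar to $r(A)(C_k\oplus B)$.

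For the reverse direction, suppose $A$ is unitarily similar to $r(A)(C_k\oplus B)$ under the stated bounds. After scaling to $r(A)=1$, the identity $W(A)=\mathrm{cvx}(W(C_k)\cup W(B))$ together with $r(C_k)=1$ and $r(B)\leq 1$ gives $\max_{\zeta\in W(A)}|\zeta^k|=r(A)^k=1$, while $\|A^k\|=\max\{\|C_k^k\|,\|B^k\|\}=2$ using $\|B^k\|\leq 2$. These together yield \eqref{eq:test-1}.

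The main obstacle is the identity $AQ_1=Q_1C_k$, which relies on the specific pattern of entries $\sqrt{2},1,\dots,1,\sqrt{2}$ in $C_k$ matching the images $Aq_j$ produced by Crabb's theorem. This is essentially bookkeeping once $A^{k+1}v=0$ and $\|A^iv\|=\sqrt{2}$ for $1\leq i\leq k-1$ are in hand, but it is the only step where the precise structure of the Crabb-Choi-Crouzeix matrix enters; everything else is a direct consequence of the reducing-subspace property or of how $W(\cdot)$, $\|\cdot\|$, and $r(\cdot)$ behave on block-diagonal sums.
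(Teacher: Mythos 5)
Your proof is correct and follows essentially the same route as the paper: scale to $r(A)=1$, apply Lemma~\ref{lem:one} to get that $A^k$ is half-radial with $\|A^k v\|=2$ for a maximum right singular vector $v$, invoke Crabb's theorem to build the orthonormal chain $\frac{A^k v}{2},\dots,v$ spanning a $(k+1)$-dimensional reducing subspace on which $A$ acts as $C_k$, complete to a unitary $Q$ to obtain $Q^*AQ = C_k\oplus B$, and read off the bounds on $r(B)$ and $\|B^k\|$ from how $r(\cdot)$ and $\|\cdot\|$ behave on block-diagonal matrices; the converse is the same direct computation as in the paper.
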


\begin{proof}
If $A$ is unitarily similar to the matrix \eqref{eq:final},
then 
\[
\|A^{k}\|=r(A)^{k}\max(\|C_{k}^{k}\|,\|B^{k}\|)=2r(A)^{k}=2\max_{\zeta\in W(A)}|\zeta^{k}|.
\]

On the other hand, if the condition \eqref{eq:test-1} is satisfied, define the matrix 
$\tilde{A} = r(A)^{-1}A$. 
Following a construction similar as in \eqref{eq:crabbQ} for $\tilde{A}$ and the index~$k$, 
we get the matrix $Q_k \in\mathbb{C}^{(n+1) \times (k+1)}$ with orthonormal
columns such that $\tilde{A}Q_k=Q_kC_{k}.$ Using Crabb's Theorem~\ref{thm:crabb}, columns
of $Q_k$ span a reducing subspace of $\tilde{A}$. Therefore, there exists
a square matrix $B \in\mathbb{C}^{(n-k) \times (n-k)}$ and a matrix 
$\tilde{Q} \in \mathbb{C}^{(n+1) \times (n-k)}$ such that $[Q_k,\tilde{Q}]$
is unitary, and 
\begin{equation}
r(A)^{-1}A[Q_k,\tilde{Q}]=[Q_k,\tilde{Q}]\left(C_{k}\oplus B\right). \label{eq:tmp1}
\end{equation}
Moreover, combining Lemma~\ref{lem:one} and \eqref{eq:tmp1} we obtain 
\[
2r(A)^{k}=\|A^{k}\|=r(A)^{k}\max(\|C_{k}^{k}\|,\|B^{k}\|),
\]
i.e., $\max(\|C_{k}^{k}\|,\|B^{k}\|)=2$ and $\|B^{k}\|\leq2$.
Finally, from \eqref{eq:tmp1} it follows that $1=\max(r(C_{k}),r(B))$, and
therefore $r(B)\leq1.$
\end{proof}

\section{Conclusions}\label{sec:conclusion}

In this paper, we have investigated half-radial matrices, and provided several equivalent characterizations based on their
properties. Our results reveal that half-radial matrices represent a very special class of matrices. 
This leads us to the question, whether the constant $2$ in the inequality 
$||A||\leq 2 r(A)$ can be improved for some larger classes of nonnormal matrices. For example, since the constant $2$ corresponds to 
matrices with $0 \in W(A)$, one can think about an improvement of the bound if $0\notin W(A)$.

Finally, our results suggest that half-radial matrices are related 
to the case when the upper bound in Crouzeix's inequality can be attained for some polynomial.
This is supported by one of the conjectures of Greenbaum and Overton \cite[p.~242]{GrOv2018} predicting that the upper bound in Crouzeix's inequality can be attained only if $p$ is a monomial. In particular, 
if this conjecture is true, then Theorem~\ref{thm:final} characterizes all matrices for which the upper bound in Crouzeix's inequality
can be attained. A deeper analysis of this phenomenon needs further research which is, however, beyond the scope of this paper.

\section{Acknowledgment}
This research was supported by the Grant Agency of the Czech Republic under the grant no. 17-04150J.
We thank Marie Kub\'{i}nov\'{a} for careful reading of an earlier version of this manuscript 
and for pointing us to Example~2. We also thank two anonymous referees for their very helpful comments. We are very grateful to Anne Greenbaum 
for comments that led us to significant improvements of the chapter devoted to Crouzeix's conjecture.

\end{document}